\newtheorem{theorem}{Theorem}
\newtheorem{prop}[theorem]{Proposition}
\newtheorem{corollary}{Corollary}
\newtheorem*{WC2}{Whitehead's Problem}
\newtheorem*{atheorem}{Theorem A}
\theoremstyle{definition} 
\newtheorem{definition}{Definition}
\theoremstyle{remark}
\newcommand{\wt}{\widetilde}
\newcommand {\mr}{\mathrm}
\newcommand {\bZ}{\mathbb Z}
\newcommand{\aut}{\mathrm{Aut}}
\begin{document}

\title[
Normal subgroups of SimpHAtic groups
]{
Normal subgroups of SimpHAtic groups
}

\author{Damian Osajda}
\address{Instytut Matematyczny,
Uniwersytet Wroc\l awski\\
pl.\ Grun\-wal\-dzki 2/4,
50--384 Wroc{\l}aw, Poland}
\address{Institute of Mathematics, Polish Academy of Sciences\\
\'Sniadeckich 8, 00-656 War\-sza\-wa, Poland}
\email{dosaj@math.uni.wroc.pl}
\subjclass[2010]{{20F65, 20F67, 20F69, 57M07, 20F06}} \keywords{systolic group, asymptotic asphericity, normal subgroup, topology at infinity}

\thanks{Partially supported by Narodowe Centrum Nauki, grants no.\ UMO-2012/06/A/ST1/00259 and UMO-2017/25/B/ST1/01335. This work was partially supported by the grant 346300 for IMPAN from the Simons Foundation and the matching 2015-2019 Polish MNiSW fund.}
\date{\today}

\begin{abstract}
A group is SimpHAtic if it acts geometrically on a simply connected simplicially hereditarily aspherical (SimpHAtic) complex.
We show that finitely presented normal subgroups of the SimpHAtic groups
are either:
\begin{itemize}
\item
finite, or
\item
of finite index, or
\item
virtually free.
\end{itemize}
This result applies, in particular, to normal subgroups of systolic groups. We prove similar strong restrictions on group extensions for other classes of asymptotically aspherical groups.
 The proof relies on studying homotopy types at infinity of groups in question.
 
 We also show that nonuniform lattices in SimpHAtic complexes (and in more general complexes) are not finitely presentable and that finitely presented groups acting properly on such complexes act geometrically on SimpHAtic complexes. 

In Appendix we present the topological $2$--dimensional quasi-Helly property of systolic complexes.
\end{abstract}

\maketitle

\section{Introduction}
\label{s:intro}
\emph{Systolic complexes} were introduced for the first time by Chepoi \cite{Che1} under the name \emph{bridged complexes}. They were
defined
there as flag simplicial completions of \emph{bridged graphs} \cites{SoCh, FaJa} -- the latter appear naturally in many contexts 
related to metric properties of graphs and, in particular, convexity. Systolic complexes were rediscovered by Januszkiewicz-{\' S}wi{\c a}tkowski \cite{JS1} and Haglund \cite{Hag}, independently, in the context of geometric group theory. Namely, \emph{systolic groups}, that is, groups acting geometrically on systolic complexes have been used to 
construct important new examples of highly dimensional Gromov hyperbolic groups -- see e.g.\ \cites{JS0,JS1,O-chcg,OS}. 

Despite the fact that systolic groups and complexes can have arbitrarily high dimension\footnote{Here, we refer to various notions of \emph{dimension}: the (virtual) cohomological one, the asymptotic one, the geometric one...}, in many aspects they behave like two-dimensional objects.
This phenomenon was studied thoroughly in \cites{JS2,O-ci,O-ib,Sw,OS} (see also Appendix), and the intuitive picture is that systolic complexes and groups 
``asymptotically do not contain essential spheres''. 
Such behavior is typical for objects of dimension at most two, but systolic complexes and groups, and related classes (see e.g.\ \cites{ChOs,O-sdn,OS}) are the first highly dimensional counterparts. This implies, in particular, that systolic groups are very distinct from many classical groups, e.g.\ from uniform lattices in $\mathbb H^n$, for $n\geqslant 3$ -- see \cites{JS2,O-ci,O-ib,OS,Gom}.
\medskip

In the current article we present an algebraic counterpart of the asymptotic asphericity phenomenon mentioned above. Recall
that a theorem of Bieri \cite[Theorem B]{Bie} says that a finitely presented normal subgroup of a group of cohomological dimension at most two is either free or of finite index. Surprisingly, such strong restrictions on normal subgroups can hold also in the case of objects of  
arbitrarily high dimension, as the two following main results show. A group is \emph{SimpHAtic}\footnote{This term is meant rather to 
be associated with the words: \emph{sympatyczny} (Polish), \emph{sympathisch} (German), \emph{sympathique} (French), and \emph{\foreignlanguage{russian}{simpatichny\u i}} (Russian).} 
if it acts geometrically on a
 simply connected simplicially hereditarily aspherical (SimpHAtic) complex (see Subsection~\ref{s:SimpHAtic} for the precise definition).
\begin{theorem}
\label{t:0}
A finitely presented normal subgroup of a SimpHAtic group is either of finite index or virtually free.
\end{theorem}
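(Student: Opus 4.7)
The plan is to argue by contradiction: assume $H$ is finitely presented, normal in $G$, of infinite index, and not virtually free, and derive a contradiction from the asymptotic asphericity of the SimpHAtic complex $X$ on which $G$ acts geometrically.

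First I would reduce to the case in which $H$ itself is one-ended. Since $H$ is finitely presented and not virtually free, Dunwoody's accessibility together with Stallings' theorem yields a finite graph-of-groups decomposition of $H$ with finite edge groups and one-ended vertex groups. Normality of $H$ in $G$ forces $G$ to permute the conjugacy classes of these vertex groups, so a finite-index subgroup of $G$ commensurates one of them; I would work with this one-ended vertex subgroup in place of $H$ and check that the needed hypotheses survive.

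Second, I would invoke the asymptotic asphericity of SimpHAtic complexes established in the works cited in the introduction, which (in the strongest form needed here) implies that the higher asymptotic homotopy of $X$ vanishes; in particular $\pi_2^\infty(X)=0$. The main geometric step is then to construct a non-trivial class in $\pi_2^\infty(X)$ from the extension $1 \to H \to G \to Q \to 1$ with $H$ one-ended infinite and $Q$ infinite. Choose $g \in G$ whose image in $Q$ has infinite order. A one-ended $H$-orbit supplies, outside every compact set, an essential loop $\gamma$ representing the unique end of $H$, while translating by powers of $g$ gives a proper sequence of loops $g^n\gamma$ escaping in a direction transverse to $H$. Joining $\gamma$ to $g^n\gamma$ by a proper strip built from a word in $g$, and capping off using the one-endedness of $H$, should produce a proper essential map $S^2 \to X$ whose image leaves every compact set.

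The principal obstacle is this last construction: turning the informal ``two independent directions produce a sphere at infinity'' picture into an honest non-trivial class in $\pi_2^\infty(X)$ requires the fine combinatorial control of SimpHAtic complexes, notably the topological $2$-dimensional quasi-Helly property developed in the appendix, in order to rule out that the proper $S^2$ bounds a proper $3$-ball in $X$ and is therefore null-homotopic at infinity. Further care is needed in the reduction step, since the vertex subgroups in a Stallings-Dunwoody decomposition of $H$ need not themselves be normal in $G$; one may instead have to work with commensurated subgroups and exploit the fact that passing to finite-index overgroups preserves the SimpHAtic property.
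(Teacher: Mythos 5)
Your overall strategy (contradiction, Stallings--Dunwoody, vanishing of $\pi_2^{\infty}$ from asymptotic asphericity, an essential $2$--sphere at infinity built from the $K$-- and $Q$--directions) matches the paper's, but there are three genuine gaps. First, mere one-endedness of $H$ (or of a vertex group of its decomposition) does not supply an ``essential loop outside every compact set'': that is the statement $\pi_1^{\infty}\neq 0$, which is strictly stronger than one-endedness. The paper gets it by observing that the one-ended vertex groups $K_i$ are themselves SimpHAtic (full subcomplexes and covers of SimpHAtic complexes are SimpHAtic, plus the Hanlon--Mart{\'{\i}}nez-Pedroza subgroup theorem), and that one-ended groups acting geometrically on finite-dimensional contractible AHA complexes satisfy $\pi_1^{\infty}\neq 0$. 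Without this input the sphere you build has no reason to be essential. Relatedly, your reduction to a one-ended subgroup via commensuration destroys the extension structure and is unnecessary: the paper keeps $K$ and $Q$ whole, models $G$ properly $3$--equivalently on a product $Y\times Z$ of trees of classifying complexes for the vertex groups, and localizes the sphere in the factor $Y_1$ corresponding to a one-ended vertex group.

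Second, your construction uses a single escape direction transverse to $H$ (powers of one element $g$), whereas the argument needs \emph{two distinct ends of $Q$} for the two cone points of the suspension: if both caps escape toward the same end, the two hemisphere classes cancel in the same wedge factor and the sphere is null-homotopic at infinity. That $Q$ is multi-ended is itself a nontrivial step, coming from $\pi_1^{\infty}(G)\neq 0$ (which needs finite dimensionality of $X$) together with the Houghton--Jackson theorem. Third, the non-triviality of the resulting sphere is not obtained from the quasi-Helly property of the Appendix --- the author explicitly remarks there that this was the initial, ultimately unsuccessful, approach. Instead one computes the homotopy type of $Y\times Z - M\times N$ as a wedge containing two copies of $Y_1/(Y_1-M)$ and uses the isomorphisms $\pi_2(Y_1/(Y_1-M))\cong\pi_2(Y_1,Y_1-M)\cong\pi_1(Y_1-M)$ (valid since $Y_1$ is contractible and $Y_1-M$ is path-connected) to detect the class of the original essential loop. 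As written, your key step --- ruling out that the proper $S^2$ bounds at infinity --- is exactly the part that remains unproved.
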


The following corollary is an immediate application of Theorem~\ref{t:0} to specific classes of SimpHAtic groups.

\begin{corollary}	
\label{c:0}
Finitely presented normal subgroups of groups acting geometrically on either:
\begin{enumerate}
\item[(a)] systolic complexes, or
\item[(b)] weakly systolic complexes with $SD_2^{\ast}$ links, or
\item[(c)] simply connected nonpositively curved $2$--complexes, or
\item[(d)] simply connected graphical small cancellation complexes
\end{enumerate}
are of finite index or virtually free.
\end{corollary}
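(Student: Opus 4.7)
The plan is to derive Corollary~\ref{c:0} immediately from Theorem~\ref{t:0} by showing that, for each of the four classes (a)--(d), the given complex is (or admits an equivariant replacement that is) a simply connected simplicially hereditarily aspherical complex in the sense of Subsection~\ref{s:SimpHAtic}. Simple connectedness is built into each hypothesis, so the substance of the verification is the hereditary asphericity condition, namely that every full subcomplex is aspherical. Once this has been checked for each class, Theorem~\ref{t:0} applies verbatim and yields the stated dichotomy.

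For (a), systolic complexes, the verification is essentially classical: systolic complexes are flag and locally $6$--large, full subcomplexes of systolic complexes are themselves systolic, and systolic complexes are contractible. Thus hereditary asphericity is immediate from the definition together with the standard contractibility theorem. For (c), a simply connected nonpositively curved $2$--complex is aspherical by Cartan--Hadamard, and after passing to a flag triangulation (e.g.\ the second barycentric subdivision) any full subcomplex remains a simply connected nonpositively curved $2$--complex; asphericity of full subcomplexes then follows from the general fact that $2$--dimensional aspherical combinatorial complexes remain aspherical under restriction to full subcomplexes.

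For (b), weakly systolic complexes with $SD_2^{\ast}$ links, one invokes the structural theory developed for such complexes in the references cited earlier in the introduction: the $SD_2^{\ast}$ hypothesis on links is designed precisely so that taking full subcomplexes preserves enough local structure to guarantee asphericity. For (d), simply connected graphical small cancellation complexes are diagrammatically aspherical by standard small cancellation arguments, and one checks that restriction to a full subcomplex preserves the relevant small cancellation conditions on the surviving pieces, so every full subcomplex is again aspherical.

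I expect the main obstacle to be the careful verification of the hereditary condition for classes (b) and (d): for (a) and (c) the required asphericity of \emph{full} subcomplexes is either tautological or classical, whereas for weakly systolic complexes with $SD_2^{\ast}$ links and for graphical small cancellation complexes one must honestly track how the local combinatorial hypotheses behave under passage to full subcomplexes, rather than merely inheriting asphericity of the ambient complex. Once this bookkeeping is done, the corollary follows by direct appeal to Theorem~\ref{t:0}, absorbing the ``finite" case of the abstract into ``virtually free".
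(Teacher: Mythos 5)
Your overall strategy coincides with the paper's: verify that each of the four classes consists of (subdivisions of) simply connected SimpHAtic complexes and then quote Theorem~\ref{t:0}. Two of your verifications, however, contain genuine errors. First, you repeatedly assert that full subcomplexes inherit simple connectedness: you claim that full subcomplexes of systolic complexes are ``themselves systolic,'' and that a full subcomplex of a simply connected nonpositively curved $2$--complex ``remains a simply connected nonpositively curved $2$--complex.'' This is false --- a full subcomplex of a contractible flag complex can be a (full) cycle. What SimpHAticity demands is only \emph{asphericity} of full subcomplexes, and the correct route is that the \emph{local} conditions (local $6$--largeness, respectively the link condition for nonpositive curvature) pass to full subcomplexes, so the universal cover of a full subcomplex is systolic (respectively $\mathrm{CAT}(0)$ and $2$--dimensional), hence contractible, whence the full subcomplex is aspherical. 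This is exactly the content of \cite[Proposition 8.2]{O-sdn}, which the paper cites for (a) and (b), and of the one-line observation the paper makes for (c).

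Second, and more seriously, in case (c) you invoke ``the general fact that $2$--dimensional aspherical combinatorial complexes remain aspherical under restriction to full subcomplexes.'' There is no such fact: asphericity of subcomplexes of aspherical $2$--complexes is precisely Whitehead's asphericity problem, which is open --- the paper itself emphasizes this by restating Whitehead's problem as the question of whether all aspherical $2$--complexes are SimpHAtic. Fortunately you do not need this claim, since it is nonpositive curvature itself, not merely asphericity, that is inherited by subcomplexes of a $2$--complex; that is the argument the paper actually gives. Your treatments of (b) and (d) defer to the literature without supplying the verification you yourself identify as the main obstacle; the paper does the same, citing \cite[Proposition 8.2]{O-sdn} for (b) and \cite[Example 4.4]{OS} (or Wise's result \cite{Wise} that graphical small cancellation groups act geometrically on systolic complexes) for (d), so this is acceptable as a sketch once the precise statements are in hand. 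With the slips above repaired, your argument reduces to the paper's.
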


To the best of our knowledge, even in the case of small cancellation groups the result is new.\footnote{Note that not all small cancellation groups fulfill the assumptions of Bieri's theorem \cite{Bie}, and that (d) in Corollary~\ref{c:0} goes beyond the case of (graphical) small cancellation groups as understood usually.} 
However, the main point is that it holds also for higher dimensional groups. The case (a) establishes a conjecture by Dani Wise \cite[Conjecture 11.8]{Wise} (originally concerning only torsion-free groups) and answers some questions from \cite[Question 8.9(2)]{JS2}. A related conjecture \cite[Conjecture 11.6]{Wise} concerning description of centralizers of hyperbolic elements has been established in \cite{OsPr}.
The case (b) deals with a subclass of weakly systolic groups studied in \cites{O-sdn,ChOs,OS}, containing systolic groups and exhibiting the same asphericity phenomena. In particular, the corollary extends some results from \cite{JS2,O-ci,O-sdn,OS}.
For other classes of SimpHAtic groups to which Theorem~\ref{t:0} applies see e.g.\ \cite[Examples 4.4]{OS}.

Theorem~\ref{t:0} is a consequence of the following more general result. ($\pi_n^{\infty}$ denotes the $n^{\mr{th}}$ homotopy pro-group
at infinity -- see Subsection~\ref{s:prohom}.)

\begin{theorem}	
\label{t:2}
Let $K\rightarrowtail G \twoheadrightarrow Q$ be an exact sequence of infinite groups.
Assume that $G$ and $K$ have type $F_{\infty}$, $\pi_1^{\infty}(G)\neq 0$, and that either:
\begin{enumerate}
\item
$K$ is not virtually free and acts geometrically on a finite-dimensional contractible complex, or
\item
$Q$ is not virtually free and acts geometrically on a finite-dimensional contractible complex.
\end{enumerate}
Then 
$\pi_{n}^{\infty}(G)\neq 0$, for some $n\geqslant 2$.
\end{theorem}

This theorem, although quite technical in nature, is of its own interest and allows to conclude many other results
around normal subgroups of groups with some asphericity properties. 
We present below only three consequences of Theorem~\ref{t:2}.
(The notion of asymptotic hereditary asphericity is discussed in Subsection~\ref{s:AHA}.) 

\begin{corollary}
\label{c:1}	
Let $K$ be a finitely presented normal subgroup of an asymptotically hereditarily aspherical (AHA)
group $G$ of finite virtual cohomological dimension. 
Then $K$ is either of finite index or virtually free.
\end{corollary}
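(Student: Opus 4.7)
The plan is to reduce to applying Theorem~\ref{t:2} and then to invoke Stallings' ends theorem, together with Dunwoody's accessibility, to conclude that $K$ is virtually free. If $K$ is finite it is already virtually free, and if $K$ has finite index in $G$ the second alternative holds; so I may assume both $K$ and $Q := G/K$ are infinite. Since the conclusion is invariant under passage to finite-index subgroups, I may replace $G$ by a torsion-free finite-index subgroup (and $K$ by its intersection with that subgroup), thus assuming $G$ is torsion-free of cohomological dimension $d < \infty$. If $d \leq 1$ then $G$ is free, hence so is $K$; I therefore assume $d \geq 2$ and set $n := d$.

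To apply Theorem~\ref{t:2} with $n \geq 2$, I would check its hypotheses one by one. The group $G$ is of type $F_{n+1}$ from its finite $K(G,1)$. The AHA property yields $\pi_k^{\infty}(G) = 0$ for all $k \geq 2$, so in particular $\pi_n^{\infty}(G) = 0$. The hereditary part of AHA, applied to the finitely presented $K \leq G$, provides $K$ with the AHA property and boosts its finiteness type from the given $F_2$ up to $F_{n+1}$. Finally, $\pi_1^{\infty}(G) \neq 0$: otherwise all $\pi_k^{\infty}(G)$ with $k \geq 1$ would vanish, making $G$ aspherical at infinity in the strongest pro-homotopy sense, which combined with finite cohomological dimension $d \geq 2$ is incompatible with the AHA hypothesis unless $G$ is already virtually free (a case already handled). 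Theorem~\ref{t:2} then yields $\pi_0^{\infty}(K) \neq 0$; that is, $K$ has more than one end.

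By Stallings' theorem, $K$ splits non-trivially as an amalgamated product or an HNN extension over a finite subgroup. Since $K$ is finitely presented, Dunwoody's accessibility theorem supplies a finite graph-of-groups decomposition of $K$ with finite edge groups and with vertex groups that are each either finite or one-ended. Any one-ended vertex group is a finitely presented subgroup of $G$, hence itself AHA by hereditariness and of finite virtual cohomological dimension; repeating the preceding argument on it would force it to be multi-ended, a contradiction. Consequently every vertex group is finite and $K$ is virtually free. The main obstacle in this plan is the verification that $K$ is of type $F_{n+1}$ (which depends essentially on the hereditary nature of the AHA hypothesis) and, as a secondary point, the exclusion of $\pi_1^{\infty}(G) = 0$ for $G$ that is not virtually free; together these are the technical keys that make Theorem~\ref{t:2} applicable in this setting.
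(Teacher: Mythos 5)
There is a genuine gap in your final step. From Theorem~\ref{t:2} you extract only the first conclusion, $\pi_0^{\infty}(K)\neq 0$, and then dispose of a one-ended vertex group $K_i$ of the Stallings--Dunwoody decomposition by ``repeating the preceding argument on it.'' That repetition is not available: the preceding argument requires the group in question to be an infinite \emph{normal} subgroup with infinite quotient inside an AHA group of finite vcd, and a vertex group of the decomposition carries no such normal structure, so Theorem~\ref{t:2} cannot be fed with it. Nor is ``one-ended, finitely presented, AHA, finite vcd'' self-contradictory --- $\mathbb{Z}^2$ and surface groups are such groups --- so no contradiction can be extracted from those properties of $K_i$ alone. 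The step that actually closes the loop, and the one the paper uses, is to take $n=2$ in Theorem~\ref{t:2} and use its \emph{second} conclusion, $\pi_1^{\infty}(K)=0$: if some $K_i$ were one-ended, then being AHA (by heredity, \cite[Corollary 3.4]{JS2}) and of finite vcd it would satisfy $\pi_1^{\infty}(K_i)\neq 0$ by \cite[Theorem 7.6]{OS}, and this nonvanishing propagates through the graph-of-groups decomposition to $\pi_1^{\infty}(K)\neq 0$ --- a contradiction. Your choice $n=d$ is counterproductive here: it yields only $\pi_{d-1}^{\infty}(K)=0$ rather than the needed $\pi_1^{\infty}(K)=0$; any fixed $n\geqslant 2$ is admissible since AHA kills all $\pi_k^{\infty}(G)$ with $k\geqslant 2$, and $n=2$ is the right choice.

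Two smaller points. The claim that $G$ has type $F_{n+1}$ ``from its finite $K(G,1)$'' is unjustified: finite (virtual) cohomological dimension provides no finiteness properties at all, let alone a finite $K(G,1)$; the correct source is that finitely presented AHA groups have type $F_{\infty}$ \cite[Theorem C]{OS}, applied to $G$ and, via heredity, to $K$. Likewise, your exclusion of $\pi_1^{\infty}(G)=0$ is only a heuristic; the precise statement needed is exactly \cite[Theorem 7.6]{OS}, which should be cited rather than re-derived (your sketch does not explain why pro-triviality of all homotopy groups at infinity conflicts with finite dimension). The reduction to a torsion-free finite-index subgroup and the case $d\leqslant 1$ are harmless but unnecessary.
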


Theorem~\ref{t:2} implies also similar restrictions on quotients.

\begin{corollary}
\label{c:1a}	
Let $K\rightarrowtail G \twoheadrightarrow Q$ be an exact sequence of infinite groups with finitely presented
kernel $K$.
Assume that $G$ is a hyperbolic AHA group of finite virtual cohomological dimension. Then $K$ is virtually non-abelian free, and $Q$ is virtually free.
\end{corollary}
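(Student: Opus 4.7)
The plan is to combine Corollary~\ref{c:1}, applied to the kernel $K$, with Theorem~\ref{t:2}, applied to the full sequence to control the quotient, together with standard facts about normal subgroups of hyperbolic groups. First, I handle the kernel: Corollary~\ref{c:1} gives that the finitely presented normal subgroup $K$ of the AHA group $G$ of finite virtual cohomological dimension is either of finite index or virtually free; since $Q$ is assumed infinite, the finite-index alternative is ruled out, so $K$ is virtually free. To upgrade ``virtually free'' to ``virtually non-abelian free'', I observe that $G$ cannot be virtually cyclic (otherwise $K$, being infinite, would again have finite index in $G$, contradicting $Q$ infinite), hence $G$ is non-elementary hyperbolic; a classical theorem of Gromov then says that every infinite normal subgroup of a non-elementary hyperbolic group contains a non-abelian free subgroup, and combined with virtual freeness this forces $K$ to be virtually non-abelian free.

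For the quotient $Q$, I would apply Theorem~\ref{t:2}. Both $G$ (hyperbolic) and $K$ (virtually free) are of type $F_\infty$, so the finiteness hypothesis holds. The AHA property of $G$, together with its finite vcd, should give $\pi_n^\infty(G)=0$ in the relevant range, while the hypothesis $\pi_1^\infty(G)\neq 0$ must be extracted from the asymptotic topology of AHA hyperbolic groups (for instance from the non-simple-connectivity at infinity of a geometric model for $G$). Theorem~\ref{t:2} then yields $\pi_0^\infty(Q)\neq 0$---so $Q$ has more than one end---and $\pi_{n-1}^\infty(Q)=0$. From here, one would combine the multi-endedness of $Q$, the vanishing of its higher pro-homotopy at infinity, and Dunwoody's accessibility for finitely presented groups, to obtain a graph-of-groups decomposition of $Q$ with finite edge groups in which no one-ended vertex group can occur, leaving only finite vertex groups and hence $Q$ virtually free.

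The hard part will be this last inference---using the vanishing of the higher pro-homotopy at infinity of $Q$ to exclude one-ended pieces in the Dunwoody decomposition. A secondary technical point is the verification of $\pi_1^\infty(G)\neq 0$ required by Theorem~\ref{t:2}: this nontriviality condition has to be established from the AHA structure of $G$, perhaps by exhibiting an essential loop at infinity in a geometric model, or by invoking known computations for the relevant hyperbolic AHA examples (such as systolic groups or weakly systolic groups with $SD_2^{\ast}$ links).
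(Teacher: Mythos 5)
Your treatment of the kernel is fine: Corollary~\ref{c:1} gives that $K$ is virtually free (finite index being excluded since $Q$ is infinite), and your upgrade to ``virtually non-abelian free'' via Gromov's theorem that an infinite normal subgroup of a non-elementary hyperbolic group contains a non-abelian free group is a valid alternative to the paper's route, which instead observes that virtually cyclic subgroups of hyperbolic groups are quasiconvex and that normal quasiconvex subgroups are finite or of finite index. Likewise, the hypothesis $\pi_1^\infty(G)\neq 0$ that you flag as a ``secondary technical point'' is not an issue: it is exactly \cite[Theorem 7.6]{OS}, already quoted in Subsection~\ref{s:AHA} for finitely presented AHA groups of finite virtual cohomological dimension, and $\pi_n^\infty(G)=0$ for $n\geqslant 2$ is \cite[Theorem D]{OS}.

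The genuine gap is the step you yourself identify as ``the hard part'' and leave unresolved: excluding one-ended vertex groups $Q_1$ from the Dunwoody decomposition of $Q$. The conclusion of Theorem~\ref{t:2}, namely $\pi_{m}^\infty(Q)=0$ for all $m\geqslant 1$, does not by itself contradict the presence of a one-ended vertex group --- to derive a contradiction one must know that a one-ended $Q_1$ necessarily has $\pi_n^\infty(Q_1)\neq 0$ for \emph{some} $n\geqslant 1$, and this requires a finite-dimensionality input that your sketch never produces. The paper supplies it using the hyperbolicity of $G$ in an essential way on the quotient side (which your argument for $Q$ never invokes): by Mosher's theorem \cite[Theorem A]{Mos} the quotient $Q$ is hyperbolic, hence so is a one-ended vertex group $Q_1$, which therefore acts geometrically on a finite-dimensional contractible complex (a Rips complex); by \cite[Proposition 2.9]{O-ci} such a one-ended group has $\pi_n^{\infty}(Q_1)\neq 0$ for some $n\geqslant 1$, whence $\pi_n^{\infty}(Q)\neq 0$, contradicting Theorem~\ref{t:2}. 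Without Mosher's theorem (or some substitute giving finite-dimensional geometric models for the one-ended pieces of $Q$), the argument does not close, so as written the proof of the statement about $Q$ is incomplete.
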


Questions concerning AHA group extensions appear e.g.\ in \cite[Question 8.9(2)]{JS2} and some partial results are 
presented in \cites{JS2, OS}. 
Note that all the new examples of highly dimensional hyperbolic AHA groups constructed
in \cites{JS0,JS1,O-chcg,OS} are of finite virtual cohomological dimension as in Corollary~\ref{c:1a}. 
It is not known in general whether hyperbolic systolic groups have finite virtual cohomological dimension.
Nevertheless, the following result applies to all of them.
\begin{corollary}
\label{c:3}
Let $K\rightarrowtail G \twoheadrightarrow Q$ be an exact sequence of infinite groups with finitely presented
kernel $K$. Assume that $G$ is a SimpHAtic hyperbolic group.
Then $K$ is virtually non-abelian free, and $Q$ is virtually free.
\end{corollary}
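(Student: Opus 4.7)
My plan is to attack the kernel and the quotient separately, using Theorem~\ref{t:0} for $K$ and Theorem~\ref{t:2} for $Q$, with hyperbolicity of $G$ providing extra leverage at both stages. For $K$: since $K$ is a finitely presented normal subgroup of the SimpHAtic group $G$, Theorem~\ref{t:0} gives that $K$ is finite, of finite index, or virtually free. Infiniteness of $K$ excludes the first case and infiniteness of $Q$ excludes the second, so $K$ is virtually free. To upgrade to virtually \emph{non-abelian} free I argue by contradiction: if $K$ were virtually cyclic, then $K$ would stabilize the two endpoints $\xi_\pm\in\partial G$ of the axis of any infinite-order element. Normality forces $G$ to permute $\{\xi_+,\xi_-\}$, so an index-at-most-two subgroup of $G$ fixes each point; since stabilizers of points of $\partial G$ in a word-hyperbolic group are virtually cyclic, $G$ itself would be virtually cyclic, making $Q$ finite and contradicting the hypothesis.

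For $Q$: first I rule out $G$ having more than one end. Otherwise $G$ would be virtually free by Stallings plus hyperbolicity, and Karrass--Solitar (in its virtual form) would then force the infinite, finitely generated normal subgroup $K$ to have finite index, contradicting $Q$ infinite. So $G$ is one-ended hyperbolic. Hyperbolic groups are of type $F_\infty$, and the SimpHAtic structure on the complex on which $G$ acts geometrically provides $\pi_n^\infty(G)=0$ for all $n\geq 2$. Assuming also $\pi_1^\infty(G)\neq 0$, Theorem~\ref{t:2} applied for every $n\geq 2$ then yields $\pi_0^\infty(Q)\neq 0$ and $\pi_k^\infty(Q)=0$ for all $k\geq 1$: $Q$ has more than one end and is asymptotically aspherical in every positive dimension. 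Since $Q$ is finitely presented as the quotient of finitely presented $G$ by finitely generated $K$, Dunwoody's accessibility theorem decomposes it as a finite graph of groups with finite edge groups and vertex groups that are finite or one-ended; the asymptotic vanishing should preclude one-ended vertex groups, leaving $Q$ virtually free.

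The main obstacle will be establishing $\pi_1^\infty(G)\neq 0$ --- equivalently, by Bestvina--Mess, that $\partial G$ is not simply connected --- and then rigorously excluding one-ended vertex groups in the accessibility decomposition of $Q$. Both are expected to follow from the existence of the non-elementary normal virtually free subgroup $K$ together with the SimpHAtic hypothesis, but each requires genuine additional work beyond the formal statements of Theorems~\ref{t:0} and \ref{t:2}.
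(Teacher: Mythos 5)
Your treatment of $K$ is essentially complete and agrees with the paper in substance: Theorem~\ref{t:0} (with $K$ infinite and of infinite index) gives that $K$ is virtually free, and the upgrade to non-abelian is correct --- the paper does it via quasiconvexity of virtually cyclic subgroups together with the fact that an infinite normal quasiconvex subgroup of a hyperbolic group has finite index, while your limit-set argument is a standard equivalent. (Your detour through Stallings and Karrass--Solitar to make $G$ one-ended is unnecessary: $G$ has one end simply because $K$ and $Q$ are both infinite.) The problem is that the two items you defer as ``obstacles'' are exactly where the content of the corollary lies, and neither is closed, so what you have is a plan rather than a proof.

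Concretely: (1) $\pi_1^{\infty}(G)\neq 0$ has nothing to do with the normal subgroup $K$ or with Bestvina--Mess; it is supplied directly by the SimpHAtic hypothesis, since $G$ is AHA and acts geometrically on a finite-dimensional contractible complex, so \cite[Theorem 7.6]{OS} applies --- the same input as in the proof of Theorem~\ref{t:0}. (2) The exclusion of one-ended vertex groups $Q_1$ in the accessibility decomposition of $Q$ cannot follow from ``asymptotic vanishing'' alone: for every fixed $k$ there are one-ended groups with $\pi_j^{\infty}=0$ for all $j\leqslant k$ (e.g.\ $\mathbb Z^n$ for $n\geqslant k+2$), so the vanishing of all $\pi_k^{\infty}(Q)$, $k\geqslant 1$, contradicts one-endedness of $Q_1$ only after one knows that $Q_1$ acts geometrically on a finite-dimensional contractible complex. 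This is precisely where the hyperbolicity of $G$ is used on the quotient side, and it is the missing idea in your proposal: since $K$ is non-elementary hyperbolic and normal in the hyperbolic group $G$, Mosher's theorem \cite[Theorem A]{Mos} makes $Q$ hyperbolic, hence the vertex group $Q_1$ (quasiconvex, being a vertex group of a splitting over finite subgroups) is hyperbolic and acts geometrically on a finite-dimensional contractible Rips complex; then \cite[Proposition 2.9]{O-ci} forces $\pi_n^{\infty}(Q_1)\neq 0$ for some $n\geqslant 1$, whence $\pi_n^{\infty}(Q)\neq 0$, contradicting Theorem~\ref{t:2} applied with parameter $n+1$. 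Without this step your argument for $Q$ does not go through.
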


Finally, we present two results concerning groups acting properly on SimpHAtic complexes and on more general complexes.
They follow easily from known results, but do not appear explicitly in the literature.

Recall that for a locally compact topological group $G$ with left-invariant Haar measure $\mu$, a  \emph{lattice} 
is a discrete subgroup $H<G$ such that $\mu(H\backslash G)<\infty$.
A lattice $H<G$ is \emph{uniform} if $H\backslash G$
is compact. Lattices  -- e.g.\ arithmetic lattices -- form a rich source of examples of groups, and studying them is a classical and very active field of research, cf.\ \cite{Ser1971,BaLu2001,FaHruTho}.
One important class of lattices is obtained in the following way. Let $X$ be 
a locally finite, contractible, finite dimensional CW-complex.
We consider the group $G = \aut (X)$ of \emph{admissible} automorphisms of $X$ (set-wise stabilizer of each cell coincides
with its point-wise stabilizer). Note that any group of automorphisms of a CW-complex acts by admissible automorphisms on a subdivision of the complex.  
The group $G = \aut (X)$ naturally has
the structure of a locally compact topological group, where a neighborhood
basis of the identity consists of automorphisms of $X$ that are the
identity on bigger and bigger balls. 
Lattices in $G=\aut(X)$ are also referred to as \emph{lattices in $X$}.

Uniform lattices in $X$ are clearly finitely presented and, in fact, of type F$_\infty$. 
The question of finiteness conditions for nonuniform lattices in $X$ is, in general, important and difficult, cf.\ \cite{BaLu2001,FaHruTho,Gan2012}.
For example, it is a classical result that nonuniform lattices in locally finite trees are not finitely generated, see \cite{BaLu2001}. There exist nonuniform lattices in $2$-dimensional CAT$(0)$ complexes with Kazhdan's property
(T), hence finitely generated, see \cite{CaMaSteZa1993,Zuk1996,BaSw1997,DyJa2002}. (Note that Kazhdan introduced his property (T) in order
to prove finite generation of lattices in higher rank Lie groups.) Farb, Hruska, and Thomas \cite[Conjecture 35]{FaHruTho} conjectured 
that nonuniform lattices in locally finite contractible $2$-complexes are not finitely presentable. 
Gandini proved it (and a more general fact concerning finiteness conditions) in an elegant
way in \cite{Gan2012}.  
The following theorem extends Gandini's result to a large class of complexes of arbitrarily high dimension, in particular to many high dimensional (hyperbolic) buildings.

\begin{theorem}
	\label{t:1a}
	For a finite dimensional locally finite AHA complex $X$, non\-uniform lattices in $G=\aut(X)$ are not finitely presentable.
\end{theorem}

Let $\mathcal{C}$ be a class of locally finite combinatorial complexes (that is, CW-complexes with attaching maps restricted to open cells being homeomorphisms) closed  
under taking full subcomplexes and covers (see Section~\ref{s:t1t2} for details). The following result is implicit in \cite{HaMP} (and in e.g.\ \cite{Wise,Zad2014} for some subclasses), but we did not find an explicit formulation. It might be seen as a strengthening of Theorem~\ref{t:1a} in particular cases. 

\begin{theorem}
	\label{t:2a}
	Let a finitely presented group $G$ act properly on a simply connected complex from $\mathcal{C}$. Then $G$ acts geometrically on a simply connected complex from $\mathcal{C}$.
\end{theorem}

\begin{corollary}
	\label{c:fss0}
	A finitely presented group acting properly on a locally finite simply connected SimpHAtic complex is SimpHAtic.
\end{corollary}

\begin{corollary}
	\label{c:fss}
	For $k\geqslant 6$, a finitely presented group acting properly on a locally finite $k$--systolic complex is $k$--systolic.
\end{corollary}

\subsection*{Ideas of the proofs}
\label{s:idea}
The proof of Theorem~\ref{t:2} relies on the analysis of the homotopy type at infinity (proper homotopy type) of the group $G$.
Since $K,Q$ are infinite, $G$ has one end, and furthermore $K,Q$ are not one-ended, by $\pi_1^{\infty}(G)\neq 0$ (see Section~\ref{s:pfB}). Therefore,
by theorems of Stallings and of Dunwoody the homotopy type at infinity of $K,Q$ is the type of a disjoint union of 
some points and some homotopically nontrivial pieces (see Subsections~\ref{s:1red} \& \ref{s:kpi1s}). 
Furthermore, the homotopy type at infinity of $G$ is the join of the types of
$K$ and $Q$ (see Subsections~\ref{s:YZMN} \& \ref{s:vircyc}). However, such join contains a nontrivial $n$--sphere appearing in suspensions of some factors 
coming from $K$, for some $n\geqslant 2$. This happens because $K\times \bZ$ acts geometrically on a finite-dimensional
contractible complex, and thus $\pi_n^{\infty}(K_1 \times \bZ)\neq 0$, for a one-ended factor $K_1$
of $K$, and some $n\geqslant 2$.
Therefore, $\pi_n^{\infty}(G)\neq 0$.

For Theorem~\ref{t:0}, one observes that $\pi_n^{\infty}(G)=0$, for all $n\geqslant 2$, and that finitely presented subgroups
of SimpHAtic groups are again SimpHAtic.

\subsection*{Article's structure}
\label{s:str}
In the next Section~\ref{s:prel} we present some preliminary notions and results concerning e.g.: weakly systolic complexes and groups (Subsection~\ref{s:sys}), proper homotopy type (Subsection~\ref{s:prohom}), asymptotic hereditary asphericity (Subsection~\ref{s:AHA}), and SimpHAtic groups (Subsection~\ref{s:SimpHAtic}). 
In Section~\ref{s:pfB} we prove the main Theorem~\ref{t:2}. In Section~\ref{s:pfA} we present the proofs of Theorem~\ref{t:0}
 and Corollary~\ref{c:0}. 
In Section~\ref{s:fin} we prove Corollaries~\ref{c:1}, \ref{c:1a} \& \ref{c:3}.
In Section~\ref{s:t1t2} we prove Theorems~\ref{t:1a}\&~\ref{t:2a}.

In Appendix we present the topological $2$--dimensional quasi-Helly property of systolic complexes -- Theorem A. This is yet another appearance of an asymptotic asphericity in the systolic framework.  
\medskip

\noindent
{\bf Acknowledgment.} I thank the anonymous referee for pointing out that the more general version of Theorem~\ref{t:2} appearing initially in the paper is incorrect. I thank all the referees for other corrections and many valuable remarks
improving the exposition.

\section{Preliminaries}
\label{s:prel}

\subsection{Complexes and groups}
\label{s:cplx}
Throughout this article we work usually (if not stated otherwise) with regular CW complexes, called further simply \emph{complexes} (see e.g.\ \cite{Geo-book} for some notation). 
Accordingly, all \emph{maps} are cellular maps.
We call a complex \emph{$G$--complex} if a group $G$ acts on it by cellular automorphisms. 
The action is \emph{proper}, or the complex is a \emph{proper $G$--complex} if stabilizers of cells are finite. 
The $n$--skeleton of a complex $X$ is denoted by $X^{(n)}$. 
A $G$--complex $X$ has the $n$--skeleton \emph{finite mod $G$} if the quotient $X^{(n)}/G$ is a finite complex.
Recall that a group has \emph{type $F_n$} if there is a \emph{free} (that is, with trivial stabilizers of cells) contractible $G$--complex with the $n$--skeleton finite mod $G$.
A group $G$ acts \emph{geometrically} on $X$, if $X$ is a finite-dimensional proper contractible $G$--complex with every skeleton finite mod $G$. 
In particular, $X$ is then uniformly locally finite.
We usually work with the path metric on $X^{(0)}$ defined by lengths of paths in $X^{(1)}$ connecting two given vertices.
It follows that if a finitely generated $G$ acts geometrically on $X$, then $X^{(0)}$ is quasi-isometric to $G$ equipped with a word metric coming from a finite generating set. For a subcomplex $Y$ of a complex $X$, by $X - Y$ we denote the smallest subcomplex of 
$X$ containing $X\setminus Y$ (set-theoretic difference).

\subsection{Weakly systolic complexes and groups}
\label{s:sys}
In this  subsection we restrict our studies to simplicial complexes. We follow mostly notation from e.g.\ \cites{JS1,O-ci,O-ib,O-sdn,O-chcg}.
A simplicial complex $X$ is \emph{flag} if it is determined by its $1$--skeleton $X^{(1)}$, that is, if every set of vertices pairwise connected
by edges spans a simplex in $X$. A subcomplex  $Y$ of a simplicial complex $X$ is \emph{full} if it is determined by its vertices, that is,
if any set of vertices in $Y$ contained in a simplex of $X$, spans a simplex in $Y$. 
The \emph{link} of a simplex $\sigma$ of $X$ is the simplicial complex being the union of all simplices $\tau$ of $X$ disjoint from $\sigma$, but spanning together with $\sigma$ a simplex of $X$.  A $k$--cycle is a triangulation of the circle $S^1$ consisting of $k$ edges and $k$ vertices. For $k\geqslant 5$, a flag simplicial complex $X$
is \emph{$k$--large} (respectively, \emph{locally $k$--large}) if there are no full $i$--cycles (as subcomplexes) in $X$ (respectively, in any link of $X$), for $i<k$. A complex $X$ is \emph{$k$--systolic} (respectively, \emph{systolic}) if it is simply connected and locally $k$--large (respectively, locally $6$--large) -- see \cite{JS1}.  
A \emph{$k$--wheel} is a graph consisting of a $k$--cycle and a vertex adjacent to all vertices of the cycle. A \emph{$k$--wheel with
a pendant triangle} is a graph consisting of a $k$--wheel and a vertex adjacent to two adjacent vertices of the $k$--cycle.
A complex $X$ satisfies the \emph{$SD_2^{\ast}$ property} if it is locally $5$--large and every $5$--wheel with a pendant triangle is
contained in a $1$--ball around some vertex of $X$. A flag simplicial complex is \emph{weakly systolic} if it is simply connected and satisfies
the $SD_2^{\ast}$ property -- see \cite{O-sdn,ChOs}. A weakly systolic complex in which all links satisfy the $SD_2^{\ast}$ property is
called a \emph{weakly systolic complex with $SD_2^{\ast}$ links}. A group is \emph{systolic} (respectively, \emph{$k$--systolic}, 
\emph{weakly systolic})
if it acts geometrically on a systolic (respectively, $k$--systolic, weakly systolic) complex.

\subsection{Proper homotopy type}
\label{s:prohom}
The basics about proper homotopy theory can be found in \cite{Geo-book}. 
Recall that a cellular map $f\colon X\to Y$ between complexes 
is \emph{proper} if $f^{-1}(C)\cap X^{(n)}$ is a finite subcomplex for any finite subcomplex $C\subset Y$ and any $n$ (see e.g.\ \cite[Chapter 10]{Geo-book}, where the name \emph{CW-proper} is used).
Two maps $f,f'\colon X\to Y$ are \emph{properly homotopic}, denoted $f{\simeq_p} f'$, if there exists a proper map $F\colon X\times [0,1] \to Y$ with 
$F(x,0)=f(x)$ and $F(x,1)=f'(x)$, for all $x\in X$. A proper map $f\colon X \to Y$ is a \emph{proper homotopy equivalence} (respectively, \emph{proper $n$--equivalence})
if there exists a \emph{proper homotopy inverse} (respectively, \emph{proper $n$--inverse}) $f^{-1}\colon Y \to X$ with $f^{-1}\circ f \simeq_p \mr{id}_X$, and
$f\circ f^{-1} \simeq_p \mr{id}_Y$
(respectively, $f^{-1}\circ f|_{X^{(n)}} \simeq_p \mr{id}_{X^{(n)}}$, and
$f\circ f^{-1}|_{Y^{(n)}} \simeq_p \mr{id}_{Y^{(n)}}$). In such case we say that $X$ and $Y$ have the same \emph{proper homotopy type} (respectively, \emph{proper $n$--type}).

For $n\geqslant 0$, we say that the \emph{$n^{th}$--homotopy pro-group at infinity vanishes} for $X$, denoted $\pi_n^{\infty}(X)=0$, if the following
holds. For every finite subcomplex $M\subset X$ there exists a finite subcomplex $M'$ containing $M$ such that: For any map $f\colon S^n \to
X - M'$ of the $n$--sphere $S^n$, there exists its extension $F\colon B^{n+1} \to X - M$. 
If $\pi_n^{\infty}(X)=0$ does not hold, then we write $\pi_n^{\infty}(X)\neq 0$. The condition $\pi_0^{\infty}(X)=0$ means that $X$ is \emph{one-ended}, and $\pi_0^{\infty}(X)=\pi_1^{\infty}(X)=0$ means \emph{simple connectedness at infinity}.
The next proposition shows that for a group acting in a nice way on a complex, vanishing of homotopy pro-groups at infinity
does not depend on the complex. It is a version of \cite[Proposition 2.10]{O-ci} whose proof is analogous to the one in \cite{O-ci}. (We may 
omit the requirement of `rigidity' here, since an action induced on the subdivision of a regular CW complex is rigid.)
Compare also
\cite[Theorems 17.2.3 \& 17.2.4]{Geo-book} and \cite[Theorem 7.3]{OS}, and proofs there. 

\begin{prop}
\label{p:conninf}
Let $X,Y$ be two $n$--connected proper $G$--complexes with $(n+1)$--skeleta finite mod $G$. Then $\pi_n^{\infty}(X)=0$ 
iff $\pi_n^{\infty}(Y)=0$.
\end{prop}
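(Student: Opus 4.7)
The plan is to prove only the forward implication; the reverse is symmetric. Since $\pi_n^\infty$ sees only the $(n+1)$-skeleton of a complex, the task reduces to producing equivariant cellular maps $\phi\colon X^{(n+1)}\to Y$ and $\psi\colon Y^{(n+1)}\to X$ that are proper and mutually proper $n$-inverses. Given such $\phi$ and $\psi$, transferring the vanishing of $\pi_n^\infty$ is routine: for a finite $M\subset Y$, push the problem through $\psi$ to obtain a sphere in $X$ far from a compact set, use $\pi_n^\infty(X)=0$ to fill it there, and push the filling back through $\phi$, using a proper homotopy $\phi\circ\psi\simeq_p \mr{id}_{Y^{(n)}}$ to correct the error near $M$.

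First, I would build $\phi$ by induction on $G$-orbit representatives of cells. Since $X^{(n+1)}$ is finite mod $G$, this is a finite induction. Map vertex representatives arbitrarily, and for a $k$-cell representative with $k\leqslant n+1$ whose boundary is already mapped, extend the map using $n$-connectedness of $Y$ — the attaching sphere of dimension $\leqslant n$ bounds a disk — then apply cellular approximation (passing to the barycentric subdivision if necessary; subdivisions of regular CW complexes remain regular and the induced action remains rigid, so there is no loss). Finally extend equivariantly across the orbit. The map $\psi$ is constructed identically, using $n$-connectedness of $X$.

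Second, I would verify properness. For an equivariant cellular map $\phi$ between proper $G$-complexes with $(n+1)$-skeleta finite mod $G$, the preimage of any cell $c$ of $Y$ is finite: the induced cellular map between the finite quotients $X^{(n+1)}/G\to Y^{(n+1)}/G$ shows that $\phi^{-1}(Gc)$ is a union of finitely many $G$-orbits, and within any such orbit only finitely many cells can land on $c$, because any two such cells differ by an element of the finite group $\mr{Stab}(c)$. Hence the preimage of a finite subcomplex of $Y$ is finite.

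The main technical step is to produce an equivariant proper cellular homotopy $\psi\circ\phi|_{X^{(n)}}\simeq_p \mr{id}_{X^{(n)}}$, and symmetrically for $\phi\circ\psi$. I would construct this homotopy orbit-by-orbit on the product $G$-CW complex $X^{(n)}\times[0,1]$, using $n$-connectedness of $X$ to fill the cylinder cells and equivariance to translate; properness of the homotopy follows from the same orbit-counting argument as above, applied to the product complex, which is still finite mod $G$ on the relevant skeleton. The hard part is not any single step but the bookkeeping needed to convert properness of $\phi$, $\psi$, and these two homotopies into the precise quantifiers of $\pi_n^\infty(Y)=0$ — given $M\subset Y$ one must choose $M'$ large enough that the tracks of $\phi\circ\psi\simeq_p \mr{id}_{Y^{(n)}}$ starting outside $M'$ remain outside $M$, then take the $\psi$-image as the compact set one feeds into $\pi_n^\infty(X)=0$. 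This is precisely the standard proper-homotopy-theoretic machinery of \cite[Chapters 10, 16, 17]{Geo-book}, analogous to \cite[Proposition 2.10]{O-ci}; once in place, the conclusion is a diagram-chase.
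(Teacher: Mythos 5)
Your overall strategy is the same one the paper relies on: the paper gives no self-contained argument for Proposition~\ref{p:conninf}, but defers to \cite[Proposition 2.10]{O-ci} and \cite[Chapter 17]{Geo-book}, and the proof there is exactly what you sketch --- build proper cellular maps $\phi,\psi$ that are mutual proper $n$--inverses by induction over orbits of cells, using $n$--connectedness of the target to extend over cells of dimension at most $n+1$, then transfer the quantifiers of $\pi_n^{\infty}=0$ through the properness of $\phi$, $\psi$ and the two homotopies. Your final bookkeeping paragraph (choose $M'$ so that the homotopy tracks starting outside $M'$ miss $M$, push through $\psi$, fill in $X$, push back through $\phi$) is the correct diagram chase.

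There is, however, one step that fails as literally stated: ``finally extend equivariantly across the orbit.'' The hypothesis is only that $X$ and $Y$ are \emph{proper} $G$--complexes, not free ones, so an orbit representative $c$ may have a nontrivial finite stabilizer $H=\mathrm{Stab}(c)$. A strictly $G$--equivariant map must send $c$ into the $H$--fixed (or at least $H$--invariant) part of $Y$, and nothing in the hypotheses guarantees that $Y$ has any $H$--fixed cells, nor that the filling of $\partial c$ provided by $n$--connectedness can be chosen $H$--invariantly; the same obstruction recurs when you build the homotopies on $X^{(n)}\times[0,1]$. Note also that your properness argument (``any two such cells differ by an element of the finite group $\mathrm{Stab}(c)$'') is invoked for the genuinely equivariant map, so it inherits the problem. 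The standard repair is to give up strict equivariance and construct maps that are only equivariant up to uniformly bounded displacement: choose coset representatives for each orbit, define $\phi(gc):=g\phi(c)$ for those representatives, and extend over higher cells within a uniformly bounded neighbourhood (possible because there are finitely many orbits of cells and $Y$ is uniformly locally finite). Bounded displacement is all that the orbit-counting properness argument and the final quantifier chase actually use. With that modification --- or by invoking instead the quasi-isometry invariance of the vanishing of $\pi_n^{\infty}$, as in \cite[Theorem 7.2]{OS} --- your argument goes through.
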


Therefore, we say that $\pi_n^{\infty}(G)=0$ (respectively, $\pi_n^{\infty}(G)\neq 0$) if $\pi_n^{\infty}(X)=0$ (respectively, $\pi_n^{\infty}(X)\neq 0$), for some (and hence, any) $G$--complex as in Proposition~\ref{p:conninf}. 

Similarly (cf.\ e.g.\ \cite[Theorem 7.3]{OS}), vanishing of the homotopy pro-groups at infinity is an invariant of the coarse equivalence, and hence of quasi-isometry. However,
directly from the definition above it follows that this is also an invariant of the proper homotopy type. More precisely, if there
is a proper $(n+1)$--equivalence $f\colon X \to Y$, then $\pi_n^{\infty}(X)=0$ iff $\pi_n^{\infty}(Y)=0$ -- cf.\ \cite[Chapter 17]{Geo-book}.
This invariance is crucial in our approach -- see Subsection~\ref{s:kpi1s}.

\subsection{Group homology}
\label{s:homo}

In this subsection we present a strengthening of \cite[Proposition 2.9]{O-ci} concerning the cohomology of groups. 
The standard reference for the group homology is the book \cite{Bro} (see also \cites{Geo-book,Hat}).

\begin{prop}
\label{p:homo}
Let $G$ be a group acting cocompactly and with finite stabilizers on a contractible finite-dimensional complex $X$.
If $G$ is not virtually free then there exists $2\leqslant n \leqslant \mr{dim}(X)$ such that $H^n(G;\mathbb ZG)\neq 0$.
\end{prop}

\begin{proof}
As in the proof of \cite[Proposition 2.8]{O-ci} it follows that there exists a free $\mathbb ZG$--module $\widetilde F$ with
$H^n(G;\widetilde F)\neq 0$ or $H^{n+1}(G;\widetilde F)\neq 0$, where $n:=\mr{cd}_{\mathbb Q}G\leqslant \mr{dim}(X)$. As in the proof of
\cite[Proposition 2.9]{O-ci} it follows then that $H^n(G;\mathbb ZG)\neq 0$. Since $G$ is not virtually free, by \cite[Corollary 1.2]{Dun0}
we obtain that $n=\mr{cd}_{\mathbb Q}G\geqslant 2$.
\end{proof}

\subsection{Asymptotic hereditary asphericity (AHA)}
\label{s:AHA} 

The notion of asymptotic hereditary asphericity (AHA) was introduced in \cite{JS2}. We follow here mostly notations from \cite{OS}.
Given an integer $i\geqslant 0$, for subsets $C\subseteq D$ of a metric space $(X,d)$
we say that $C$ \emph{is $(n;r,R)$--aspherical in $D$} if every simplicial map $f\colon S\to P_r(C)$
(where $P_r$ denotes the Rips complex with constant $r$),
where $S$ is a triangulation of the $n$--sphere $S^n$, has a simplicial extension
$F\colon B \to P_R(D)$, for some triangulation $B$ of the $(n+1)$--ball $B^{n+1}$ such that $\partial B=S$.
A metric space $X$ is \emph{asymptotically hereditarily aspherical}, shortly \emph{AHA},
if for every $r>0$ there exists $R>0$ such that every subset $A\subseteq X$ is $(n;r,R)$--aspherical in itself,
for every $n\geqslant 2$. A finitely generated group is AHA if it is AHA as a metric space for some (and hence any) word
metric coming from a finite generating set. A subgroup of an AHA group is AHA \cite[Corollary 3.4]{JS2}.
A finitely presented AHA group $G$ has type $F_{\infty}$ \cite[Theorem C]{OS}, and $\pi_n^{\infty}(G)=0$ for all $n\geqslant 2$ 
\cite[Theorem D]{OS}. If moreover, the virtual cohomological dimension of $G$ is finite, or if $G$ acts geometrically on a contractible
complex of finite dimension, then $\pi_1^{\infty}(G)\neq 0$ \cite[Theorem 7.7]{OS}.

For examples of AHA groups see the next subsection.

\subsection{SimpHAtic groups}
\label{s:SimpHAtic}
In this subsection we introduce the notions of SimpHAtic complexes and SimpHAtic groups. 
\begin{definition}
A flag simplicial complex is \emph{simplicially hereditarily aspherical}, shortly \emph{SimpHAtic}, if every full subcomplex is
aspherical. 

A group acting geometrically on a simply connected SimpHAtic complex is called \emph{SimpHAtic}.
\end{definition}
\noindent
Let us note few simple facts about SimpHAtic groups:
\begin{enumerate}
\item
A simply connected SimpHAtic complex is contractible, by Whitehead's theorem (see e.g.\ \cite[Theorem 4.5]{Hat} or \cite[Proposition 4.1.4]{Geo-book}).
\item
SimpHAtic complexes and groups are AHA, by \cite[Corollary 3.6]{OS}. It follows, that for a one-ended SimpHAtic group $G$, $\pi_1^{\infty}(G)\neq 0$ \cite[Theorem 7.7]{OS}, and $\pi_n^{\infty}(G)=0$ \cite[Corollary 7.6]{OS}, for $n\geqslant 2$.

\item
Weakly systolic complexes with $SD_2^{\ast}$ links are SimpHAtic by \cite[Proposition 8.2]{O-sdn}. In particular, systolic complexes
are SimpHAtic, and systolic groups are SimpHAtic (see \cite{JS1}).
\item
There are SimpHAtic groups that are not weakly systolic -- e.g.\ some Baumslag-Solitar groups. For other examples of SimpHAtic
groups see e.g.\ \cite{OS}.
\item
Full subcomplexes and covers of SimpHAtic complexes are again SimpHAtic. Therefore, by a theorem of Hanlon-Mart{\'{\i}}nez-Pedroza
 \cite[Theorem 1.1]{HaMP}, finitely presented subgroups of SimpHAtic groups are again SimpHAtic.
\end{enumerate}

We cannot resist to formulate here the famous Whitehead's asphericity question in the following way:
\begin{WC2}
Are all aspherical $2$--complexes SimpHAtic?
\end{WC2}

\section{Proof of Theorem~\ref{t:2}}
\label{s:pfB}
In this section we present the proof of Theorem~\ref{t:2} from the Introduction.
Recall that $K\rightarrowtail G \twoheadrightarrow Q$ is an exact sequence of infinite groups. 
Since $K,G$ have type $F_{\infty}$, the image $Q$ has type $F_{\infty}$ as well -- see e.g.\ \cite[Theorem 7.2.21]{Geo-book}. 
Since $K,Q$ are infinite, we have that $G$ has one end, that is, $\pi_0^{\infty}(G)=0$ -- see e.g.\ \cite[Corollary 16.8.5]{Geo-book}.
By the result of Houghton \cite{Hou} and Jackson \cite{Jac} (see also \cite[Corollary 16.8.5]{Geo-book}), from the fact that $\pi_1^{\infty}(G)\neq 0$ we conclude that neither $K$ nor $Q$ has
one end, that is $\pi_0^{\infty}(K)\neq 0\neq \pi_0^{\infty}(Q)$. 
For the rest of the proof we focus on the case (1), that is we assume that $K$ is not virtually free and acts geometrically on a 
finite-dimensional contractible complex.
The proof in the other case -- under the assumptions on $Q$ -- is essentially the same (just exchanging the roles of $K$ and $Q$). 

\subsection{Decomposing $K$ and $Q$}
\label{s:1red}
Since the kernel $K$ has more than one end, by the result of Stallings \cite{Sta}, it decomposes into a free product with amalgamation
over a finite group, or as an HNN extension over a finite group. If the factors are not one-ended we may repeat decomposing further.
A theorem of Dunwoody \cite{Dun} says that finitely presented groups are accessible, that is, such a procedure ends after finitely many steps. As a result, by the classical Bass-Serre theory (see e.g.\ \cite[Chapter 6.2]{Geo-book}), the group $K$ is the fundamental group of a finite graph of groups $\mathcal K$ with vertex groups having at most one end, and finite edge groups. Let $\{ K_i \}$ be the collection of the vertex groups, and let $\{ L_j \}$ be the set of edge groups of $\mathcal K$.
We may assume that $K_i$ are nontrivial.
Since $K$ is not virtually free, by Proposition~\ref{p:homo}, we have that $H^{n'}(K;\mathbb ZK)\neq 0$ for some $n'\geqslant 2$.
Therefore, by \cite[Theorem 13.3.3]{Geo-book}, it follows that among the groups  $\{ K_i \}$, there is a one-ended one, say $K_1$, 
with $H^{n'}(K_1;\mathbb ZK_1)\neq 0$. By \cite[Theorem 17.3.5]{Geo-book} we have that $H^{n'+1}(K_1\times \mathbb Z;\mathbb Z(K_1\times \mathbb Z))\neq 0$.
By the aforementioned result of Houghton and Jackson, $\pi_1^{\infty}(K_1\times \mathbb Z)= 0=\pi_0^{\infty}(K_1\times \mathbb Z)$.
Therefore, by the Proper Hurewicz Theorem (cf.\ e.g.\ \cite[Theorem 17.1.6]{Geo-book}), it follows that there exists  $n\geqslant 2$ such that $\pi_n^{\infty}(K_1 \times \mathbb Z)\neq 0$ (compare the proof of \cite[Theorem 3.2]{O-ci}).

\subsection{Classifying spaces for $K$ and $Q$}
\label{s:kpi1s}
Since $K$ has type $F_{\infty}$, all the factors $K_i$ have also type $F_{\infty}$.
For all $i$, let $Y_i'$ be a contractible free $K_i$--complex with every skeleton finite mod $K_i$. By a general technique (see e.g.\ \cite[Chapter 6.2]{Geo-book}, and \cite[Chapter 1.B]{Hat}) one may combine copies of $Y_i'$ to obtain a tree of complexes $Y'$ being a contractible proper $K$--complex with each skeleton finite mod $K$. Furthermore, by modifying slightly each $Y_i'$ resulting in new complexes $Y_i$ (by e.g.\ coning-off $L_j$--invariant subcomplexes), we obtain a contractible proper $K$--complex $Y$ with every skeleton finite mod $K$ and satisfying the following properties:
\begin{enumerate}
\item
$Y$ is a union of copies of $Y_i$;
\item
any two such copies intersect in at most one vertex, and every vertex belongs to at most two such copies;  
\item
every $Y_i$ is a contractible proper $K_i$--complex with every skeleton finite mod $K_i$.
\end{enumerate}
Observe that, by our choice, the complex $Y_1$ has one end.

Similarly, we construct a contractible proper $Q$--complex $Z$ with each skeleton finite mod $Q$. We distinguish two cases, which will be treated separately in the two following subsections:

\medskip
\noindent
\emph{Case A: $Q$ has infinitely many ends.} In this case the complex $Z$ may be chosen to
be a union of infinite contractible complexes $\{ Z_i\}$ (we do not require that they correspond to 
the Bass-Serre decomposition of $Q$), such that any two such complexes intersect in at most one vertex, and every vertex belongs to at most two such complexes. We require here that there are at least two such complexes $Z_1,Z_2$. For example, if $Q$ is virtually free then $Z$ can
be a tree consisting of $Z_i$ being lines. If $Q$ is not virtually free then $Z_1$ may be taken to
be a complex corresponding to a one-ended factor of $Q$ (but we do not require further that $Z_1$ is one-ended).    

\medskip
\noindent
\emph{Case B: $Q$ has two ends.} In this case $Q$ is virtually cyclic, and the complex $Z$ is taken to be a complex homeomorphic to the line $\mathbb R$.
\medskip

Since $Y$ and $Z$ have skeleta finite mod $K$ and $Q$, and since $G$ has type $F_{\infty}$, there is a contractible proper $G$--complex $X$ with the following properties (see e.g.\ \cite[Proposition 17.3.4]{Geo-book}): Each skeleton of $X$ is finite mod $G$, and there is a proper homotopy equivalence between $X$ and $Y\times Z$. 
Therefore, to prove the theorem, it is enough to show that $\pi_n^{\infty}(Y\times Z)\neq 0$: By the proper $(n+1)$--equivalence, it implies $\pi_n^{\infty}(X)\neq 0$ and 
hence, by Proposition~\ref{p:conninf}, $\pi_n^{\infty}(G)\neq 0$. 

In the following two subsections 
we prove that $\pi_n^{\infty}(Y\times Z)\neq 0$ separately for two Cases: A and B. 
%
To do this we need to find subcomplexes $M\subset Y$ and $N\subset Z$ with finite $(n+1)$--skeleta, and having the following property: For any $C\subset Y\times Z$ with finite $(n+1)$--skeleton and containing $M\times N$, there is a map $S^n \to Y\times Z - C$ that is not homotopically trivial within $Y\times Z - M\times N$. 

\subsection{Case A: $Q$ has infinitely many ends.}
\label{s:YZMN}

Pick a copy of $Y_1$ in $Y$ (denoted further simply $Y_1$), and a subcomplex $Z_1$ of $Z$. 
Since $\pi_{n}^{\infty}(K_1\times \mathbb Z)\neq 0$, by Proposition~\ref{p:conninf}, $\pi_{n}^{\infty}(Y_1 \times \mathbb R)\neq 0$ -- the space $Y_1 \times \mathbb R$ (with the product structure of a complex) is a proper $(K_1 \times \mathbb Z)$--complex
with skeleta finite mod $K_1\times \mathbb Z$. Therefore, there exists a subcomplex $M\subset Y_1$ with finite $(n+1)$--skeleton and such that the following holds: 
There is a compact interval $I=[c,d]\subset \mathbb R$ such that for every $M'\subset Y_1$ with finite $(n+1)$--skeleton and containing $M$, and 
for every compact interval $I'\subset \mathbb R$ containing $I$, there exists a singular sphere $S^{n} \to Y_1 \times \mathbb R - M' \times I'$ not homotopically 
trivial within $Y_1 \times \mathbb R - M\times I$. 
Furthermore, we choose a subcomplex $N$ of $Z_1$ with finite $(n+1)$--skeleton, containing the vertex $b_1$ being the intersection of $Z_1$ and $Z_2$, and all cells in $Z_1$ containing $b_1$. This implies, in particular,
that $b_1\notin Z_1 - N$ but $b_1\in Z - N$ (because there are cells in $Z_2$ containing properly $b_1$).

\medskip
We claim that $M,N$ are as required above. To show this, let $C\subset Y \times Z$ 
be a subcomplex with finite $(n+1)$--skeleton and containing $M\times N$.
In the rest of the subsection we construct a singular sphere $\varphi \colon S^n \to Y \times Z - C$ not homotopically trivial within $Y \times Z  - M\times N$.  
\medskip

Choose $M'' \subset Y$ and $N'' \subset Z$ with finite $(n+1)$--skeleta such that $C\subset M'' \times N''$.
Let $M' = M''\cap Y_1$. 
Choose a simplicial path $l\colon I'=[c',d'] \to Z_1\cup Z_2$ (with the interval $[c',d']$ equipped with a simplicial
structure with integral vertices) with the following properties:
\begin{itemize}
	\item $c'<c$ and $d<d'$, that is, $I'=[c',d']$ contains the interval $I=[c,d]$ defined above;
	\item $l(c)=b_1$, and $l(c'),l(d')\in Z-N''$;
	\item $l([c',c])\subset Z_2$, $l([c,d])\subset Z_1$, $l([d,d'])\subset Z_1-N$. 
\end{itemize}
Let $f\colon Y_1\times \mathbb{R} - M\times I\to Y\times Z$ be a cellular map defined as follows:
\begin{itemize}
	\item $f(y,t)=(y,l(c'))$, for $t<c'$;
	\item $f(y,t)=(y,l(t))$, for $t\in[c',d']$;
	\item $f(y,t)=(y,l(d'))$, for $t>d'$.
\end{itemize}
Observe that the image of $f$ is contained in $Y\times Z - M \times N$, and that
$f(Y_1\times \mathbb R - M' \times I')\subset Y\times Z - M''\times N''$. 
Let $\overline{\varphi}\colon S^n \to Y_1\times \mathbb R - M' \times I'$ be a singular sphere
not homotopically 
trivial within $Y_1 \times \mathbb R - M\times I$. 
In the rest of the subsection we show that the singular sphere $\varphi = f \circ 
\overline{\varphi}\colon S^n \to Y\times Z - C$ is not homotopically trivial within
$Y\times Z - M \times N$. This requires a detailed analysis of the topology of the latter space.
\medskip
%

The $(n+1)$--skeleton of $Y$ (respectively, $Z$) is locally finite. Therefore, by our construction of $Y$ (respectively, $Z$), 
the set $A:=Y_1 \cap (Y - Y_1)$ (respectively, $B:=Z_1 \cap (Z - Z_1)$) is a discrete set of vertices.
Since the $(n+1)$--skeleton of $M$ (respectively, $N$) is finite, the set $A\cap M$ (respectively, $B\cap N$) is an empty or finite set $\{ a_1,a_2,\ldots, a_m \}$ (respectively, finite set $\{ b_1,b_2,\ldots, b_s \}$).
For $a\in A$ (respectively, $b\in B$) let $A_a$ (respectively, $B_b$) denote the closure in $Y$ (respectively, in $Z$) of the connected
component of $Y\setminus \{ a \}$ (respectively, $Z\setminus \{ b \}$), not containing $Y_1$ (respectively, $Z_1$). Observe that such closure
is the union of the component and $\{ a\}$ (respectively, $\{ b\}$) -- see Figure~\ref{f:EG0} (left).

\begin{figure}[h!]
\centering
\includegraphics[width=1\textwidth]{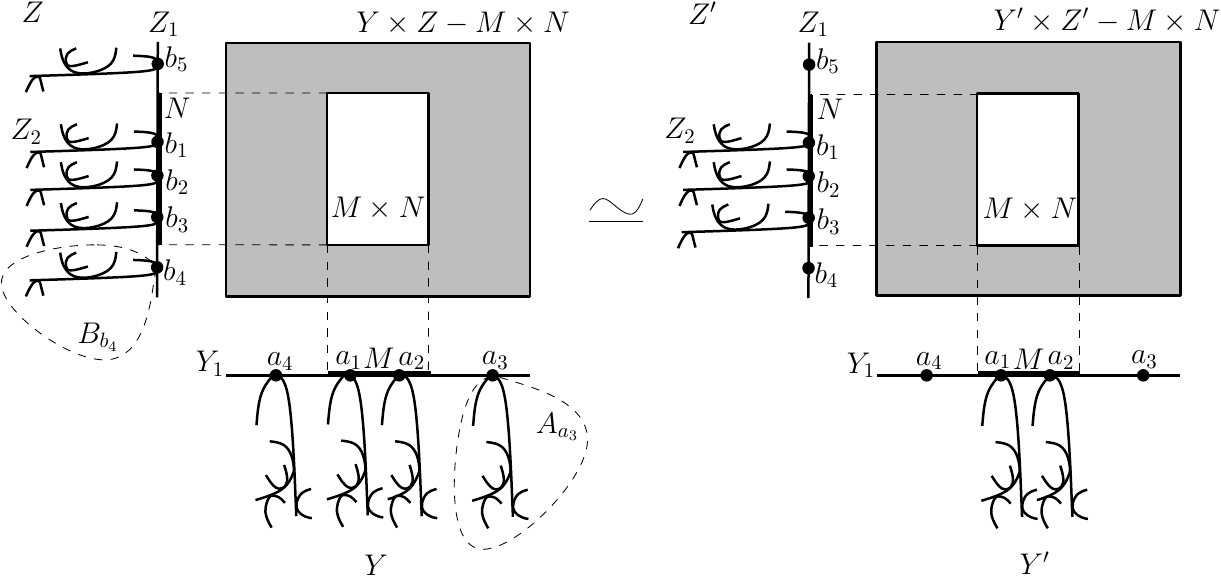}
\caption{From $Y\times Z - M \times N$ to $Y'\times Z' - M \times N$.}
\label{f:EG0}
\end{figure}

Since $A_a$ (respectively, $B_b$) is contractible, for $a\notin M$ (respectively, $b\notin N$) we may contract $A_a$ to $a$ (respectively, $B_b$ to $b$)
not changing the homotopy type of $Y\times Z - M\times N$. Therefore, let $Y'$ (respectively, $Z'$) denote $Y$ (respectively, $Z$) with 
$A_a$ (respectively, $B_b$) contracted to $a$ (respectively $b$), for all $a\notin M$ (respectively, $b\notin N$) -- see Figure~\ref{f:EG0} (right).
We have:
\begin{align}
\label{e:10}
\begin{split}
&Y'=Y_1\cup (A_{a_1}\sqcup A_{a_2}\sqcup \ldots \sqcup A_{a_m})\\
&Z'=Z_1\cup (B_{b_1}\sqcup B_{b_2}\sqcup \ldots \sqcup B_{b_s}),
\end{split}
\end{align}
and a homotopy equivalence
\begin{align}
\label{e:11}
h_1 \colon Y\times Z - M\times N \to Y' \times Z' - M\times N.
\end{align}
Furthermore, by formula (\ref{e:10}), and since $M\subset Y_1$ (respectively, $N\subset Z_1$) and $\{a_1,a_2,\ldots,a_m\}\subset M$ 
(respectively, $\{b_1,b_2,\ldots,b_s\}\subset N$), we have
\begin{align}
\label{e:12}
\begin{split}
Y' \times Z' - M\times N=& (Y_1\times Z_1 - M\times N) \cup \\ 
&\cup Y_1 \times (B_{b_1}\sqcup  \ldots \sqcup B_{b_s}) \cup \\
&\cup (A_{a_1}\sqcup  \ldots \sqcup A_{a_m}) \times Z_1 \cup \\
&\cup (A_{a_1}\sqcup  \ldots \sqcup A_{a_m}) \times (B_{b_1}\sqcup  \ldots \sqcup B_{b_s}),
\end{split}
\end{align}
where $Y_1\times Z_1 - M\times N$ and $Y_1 \times (B_{b_1}\sqcup  \ldots \sqcup B_{b_s})$ intersect along
$(Y_1 - M)\times (\{ b_1 \} \sqcup  \ldots \sqcup \{ b_s\} )$, etc.

Again, we may contract subcomplexes of the type $Y_1\times B_{b_j}$ to $Y_1\times \{b_j\}$, $A_{a_i}\times Z_1$ to $\{ a_i\} \times Z_1$, and $A_{a_i}\times B_{b_j}$ to $\{ a_i\} \times \{ b_j\}$,
obtaining the following homotopy equivalence -- see Figure~\ref{f:EG1}. 

\begin{align}
\label{e:13}
\begin{split}
h_2 \colon Y \times Z - M\times N\to & (Y_1\times Z_1 - M\times N) \cup \\ 
&\cup Y_1 \times (\{b_1\}\sqcup  \ldots \sqcup \{b_s\}) \cup \\
&\cup (\{a_1\}\sqcup  \ldots \sqcup \{a_m\}) \times Z_1 \cup \\
&\cup (\{a_1\}\sqcup  \ldots \sqcup \{a_m\}) \times (\{b_1\}\sqcup  \ldots \sqcup \{b_s\}),
\end{split}
\end{align}
where (see Figure~\ref{f:EG1}):
\begin{align}
\label{e:14}
\begin{split}
(Y_1\times Z_1 - M\times N) \cap [Y_1 \times (\{b_1\}\sqcup  \ldots \sqcup \{b_s\})]=\\
=(Y_1 - M)\times (\{b_1\}\sqcup  \ldots \sqcup \{b_s\}),\\
(Y_1\times Z_1 - M\times N) \cap [(\{a_1\}\sqcup  \ldots \sqcup \{a_m\})\times Z_1]=\\
=(\{a_1\}\sqcup  \ldots \sqcup \{a_m\})\times (Z_1 - N),\\
\mr{and}\: (\{a_i\} \times Z_1) \cap (Y_1 \times \{ b_j \}) = \{ a_i \} \times \{b_j \}.
\end{split}
\end{align}

\begin{figure}[h!]
\centering
\includegraphics[width=1\textwidth]{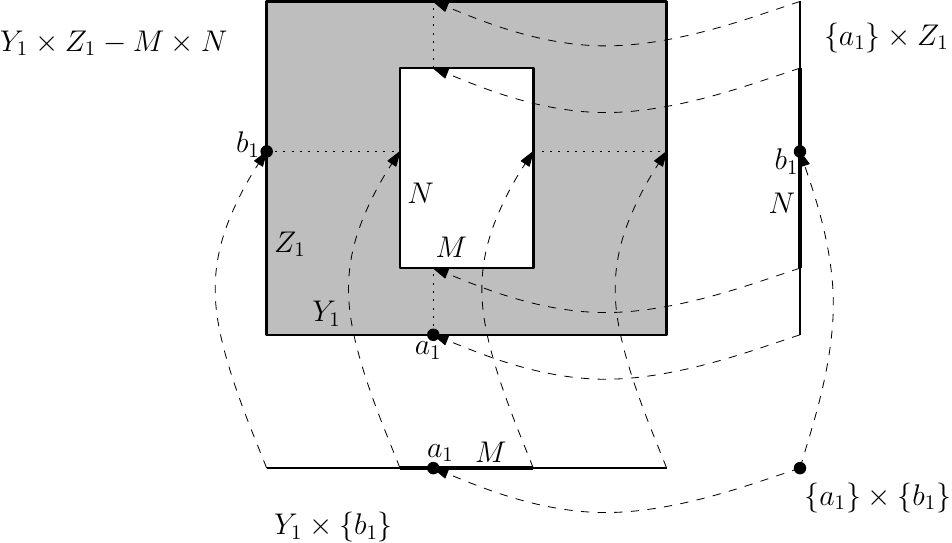}
\caption{Homotopy type of $Y'\times Z' - M \times N$. (Only $a_1$ and $b_1$ showed among $a_i$'s and $b_j$'s.)}
\label{f:EG1}
\end{figure}
Observe that the composition $h_2\circ f$ have the following properties:
\begin{itemize}
	\item $h_2\circ f(Y_1\times \mathbb{R}- M \times I)\subset (Y_1\times Z_1 - M \times N)\cup (Y_1 \times \{b_1\})$;
	\item $h_2\circ f(y,t)=(y,b_1)$, for $t\leqslant c$.
\end{itemize}

Since $Y_1$ is contractible, the restriction of the map  $h_2\circ f$ to the set $(Y_1-M)\times [c,d])\cup (Y_1\times [d,+\infty])$ is homotopic to a constant map (by, basically, ``sliding'' along the path $l$ to
$l(d)$ and
then contracting $h_2\circ f(Y_1\times [d,+\infty])$). 
Since $Y_1,Z_1$ are contractible, therefore (see e.g.\ \cite[Theorem 4.1.8]{Geo-book}), there is a homotopy equivalence
$h_3\colon Y \times Z - M\times N\to \widehat{Y}_1 \vee U$, where $\widehat{Y}_1=\Sigma(Y_1-M)$ is the suspension, and such that $h_3\circ f$ is a homotopy equivalence between $Y_1\times \mathbb{R}- M \times I$ and $\widehat{Y}_1$.
The singular sphere $h_3\circ \varphi \colon S^n \to \widehat{Y}_1$ is then homotopically nontrivial,  hence it is homotopically nontrivial as the map $S^n \to \widehat{Y}_1 \vee U$. It follows that $\varphi \colon
S^n \to Y \times Z - M\times N$ is homotopically nontrivial. This finishes the proof in the case
of $Q$ with infinitely many ends.

\subsection{Case B: $Q$ has two ends.}
\label{s:vircyc}
The idea in the case of virtually cyclic $Q$ is basically the same as before: We construct a
homotopically nontrivial sphere at infinity in $Y_1\times Z$ and then show that this sphere is essential
at infinity in $Y\times Z$. Below we present the details.
\medskip

Recall that we are looking for subcomplexes $M,N$ as described at the end of Subsection~\ref{s:kpi1s},
and that $Z$ is homeomorphic to $\mathbb{R}$ now.
Since $\pi_{n}^{\infty}(Y_1 \times Z)\neq 0$, there exists a subcomplex $M\subset Y_1$ with finite $(n+1)$--skeleton and a compact interval $I\subset \mathbb R$ such that for every compact  $M'\times I' \subset Y_1\times Z$ with finite $(n+1)$--skeleton and containing $M\times I$, there exists a singular sphere $S^{n} \to Y_1 \times Z - M' \times I'$ not homotopically 
trivial within $Y_1 \times Z - M\times I$. In what follows we show that $M$ and $N=I$ are as required.
Let $C\subset Y \times Z$ 
be a subcomplex with finite $(n+1)$--skeleton and containing $M\times N$. Let $M''\subset Y$ and $N'\subset Z$ be subcomplexes with finite $(n+1)$--skeleta such that $C\subset M''\times N'$, and let $\varphi \colon S^n \to
Y_1\times Z - (M''\cap Y_1)\times N'$  be a singular sphere not homotopically trivial within $Y_1 \times Z - M \times N$. We show now that $\varphi$ is not homotopically
trivial within $Y\times Z - M\times N$. Proceeding similarly as in the previous subsection
we obtain a homotopy equivalence $h\colon Y\times Z - M \times N \to (Y_1 \times Z - M \times N)\vee U$, such that $h|_{Y_1 \times Z - M \times N}=\mr{id}_{Y_1 \times Z - M \times N}$. It follows that
$h\circ \varphi \colon S^n \to (Y_1 \times Z - M \times N)\vee U$ is homotopically nontrivial, and hence
$\varphi$ is homotopically nontrivial within $Y\times Z - M\times N$. This finishes the proof in the case of virtually cyclic $Q$ and thus completes the proof of the case (1) of Theorem~\ref{t:2}.

%

\section{Proof of Theorem~\ref{t:0}}
\label{s:pfA}
In this section we prove Theorem~\ref{t:0} and Corollary~\ref{c:0} from the Introduction. 
\medskip

\noindent
\emph{Proof of Theorem~\ref{t:0}:} 
Assume that a finitely presented normal subgroup $K$ is neither finite nor of finite index. Then we have an exact sequence 
$K\rightarrowtail G \twoheadrightarrow Q$ of infinite groups. 
A simply connected SimpHAtic complex $X$ on which $G$ acts geometrically is AHA by \cite[Corollary 3.6]{OS}. Thus $\pi_n^{\infty}(G)=0$, for all $n\geqslant 2$,
by \cite[Theorem 7.5]{OS}.
Since $X$ is also finite-dimensional and contractible (by Whitehead's theorem), by \cite[Theorem 7.7]{OS}, we have that $G$ has type $F_{\infty}$ and $\pi_1^{\infty}(G)\neq 0$. 
Note that a full subcomplex or a covering of a SimpHAtic complex is again SimpHAtic. 
Therefore, by a theorem of Hanlon-Mart{\'{\i}}nez-Pedroza \cite[Theorem 1.1]{HaMP}, the group $K$ acts geometrically on 
a simply connected SimpHAtic complex. By the case (1) of Theorem~\ref{t:2}, it follows that $K$ is virtually free.\hfill $\Box$

\medskip

\medskip

\noindent
\emph{Proof of Corollary~\ref{c:0}:} 
It is clear that a subcomplex of a nonpositively curved (that is, locally CAT(0)) $2$--complex is itself nonpositively curved and two-dimensional, and hence aspherical. Therefore, there exists a subdivision of such complex which is a simplicial complex, hence SimpHAtic.
The SimpHAtic property for weakly systolic complexes with $SD_2^{\ast}$ links follows from \cite[Proposition 8.2]{O-sdn}. In particular, systolic complexes are SimpHAtic (see also \cite{JS1}). Graphical small cancellation complexes are (up to some 
simplicial subdivision) obviously SimpHAtic -- see e.g.\ \cite[Example
 4.4]{OS}.
Therefore, Corollary~\ref{c:0} follows directly from Theorem~\ref{t:0}.
\hfill $\Box$

\medskip
For other examples of SimpHAtic complexes extending the list in Corollary~\ref{c:0} (that is, to which Theorem~\ref{t:0} applies) see
e.g.\ \cite[Section 1.1]{HaMP} and \cite[Section 4]{OS}.

\section{Proofs of Corollaries \ref{c:1}, \ref{c:1a} and \ref{c:3}}
\label{s:fin}
\medskip

\noindent
\emph{Proof of Corollary~\ref{c:1}:} Assume that $K$ is neither finite nor of finite index.
We proceed as in the proof of Theorem~\ref{t:0}.
A finitely generated subgroup of an AHA group is AHA itself \cite[Corollary 3.4]{JS2}, thus $K$ is AHA. 
Furthermore, $K$ has finite virtual cohomological dimension (see e.g.\ \cite[Chapter VIII.11]{Bro}). By \cite[Corollary 7.6]{OS}, $\pi_n^{\infty}(G)=0$, for all $n\geqslant 2$, and by \cite[Theorem 7.7]{OS}, $\pi_1^{\infty}(G)\neq 0$. Therefore $K$ is virtually 
free, by Theorem~\ref{t:2}.\hfill $\Box$

\medskip

\noindent
\emph{Proof of Corollary~\ref{c:1a}:} 
The kernel $K$ is virtually free by Corollary~\ref{c:1}.
Note that virtually cyclic subgroups of hyperbolic groups are quasiconvex -- see \cite[Theorem 8.1.D]{Gro} and \cite[Proposition 3.2]{ABC+}.
Furthermore, it is shown in \cite{ABC+} that a normal quasiconvex subgroup of a hyperbolic group is either finite or of finite index.
Therefore, a virtually cyclic normal subgroup of a hyperbolic group is either finite or of finite index. It follows that $K$ is virtually non-abelian free.

By \cite[Theorem A]{Mos}, the quotient $Q$ is Gromov hyperbolic. Hence, $Q$ acts geometrically on a finite-dimensional contractible complex -- some Rips complex of $Q$. Therefore, by the case (2) of Theorem~\ref{t:2}, the quotient $Q$ is virtually free.\hfill $\Box$

\medskip

\noindent
\emph{Proof of Corollary~\ref{c:3}:} 
The proof is nearly the same as the one of Corollary~\ref{c:1a}. The only point is that instead of the finiteness of the virtual 
cohomological dimension we use the fact that $K$, being SimpHAtic, acts geometrically on a contractible finite-dimensional complex.
\hfill $\Box$


\section{Proofs of Theorems~\ref{t:1a}\&~\ref{t:2a}}
\label{s:t1t2}

\begin{proof}[Proof of Theorem~\ref{t:1a}]
	Let $H<\aut(X)$ be a nonuniform lattice in $X$. 
	With the right normalization of the Haar
	measure $\mu$, due to Serre \cite{Ser1971}, there is a formula for the
	covolume of $H<G$:
	\begin{align*}
		\label{e:1}
		\mu (H\backslash G)=\sum_{v\in Y} \frac{1}{|G_v|}
	\end{align*}
	where $|G_v|$ denotes the order of the stabilizer of $v$, and the sum is taken over  vertices in a 
	fundamental domain $Y$ for the $G$-action on $X$. Since $H$ is nonuniform, the sum on the right is infinite. Since $\mu (H\backslash G)<\infty$ we have that 
	there is no upper bound on the orders of stabilizers $G_v$, hence there is no bound on the orders of finite subgroups in $H$.
	Suppose, by contradiction, that
	$H$ is finitely presented. 
	Since $H$ is discrete, stabilizers of its action on $X$ are finite. Hence, by \cite[Proposition 2.2]{OS}, the group  $H$ is AHA and, by \cite[Theorem C]{OS}, it is of type F$_\infty$.
	By \cite[Proposition]{Kro1993} there is an upper bound on the orders of finite subgroups
	of $H$. This leads to contradiction.	
\end{proof}

Before proving Theorem~\ref{t:2a} we recall some terminology. We follow \cite{HaMP}.
A subcomplex $Y$ of a {combinatorial complex} $X$ is \emph{full} if for every cell $c$ in $X$ if $\partial c\subseteq Y$ then
$c\subseteq Y$. 
A (combinatorial) map $X \to Z$ 
between connected (combinatorial) complexes
is a \emph{tower} if it can be expressed as a composition
of inclusions and covering maps. A \emph{tower lifting} $f'$ of $f$ is a factorization $f=g\circ f'$
where $g$ is a tower. The lifting $f'$  is \emph{trivial} if $g$ is an isomorphism and the lifting is
maximal if the only tower lifting of $f'$ is the trivial one. 
A tower is called an $\mathcal{F}$–tower if it is a composition of covering maps and
inclusions of full subcomplexes.
The following proof is basically the same as the proof of \cite[Theorem 1.1]{HaMP}.

\begin{proof}[Proof of Theorem~\ref{t:2a}]
	Let a group $G$ act properly on a simply connected complex $X$ from $\mathcal{C}$.
	By \cite[Theorem 1.9]{HaMP} there exists a simply connected cocompact $G$-complex
	$X'$ and a $G$-equivariant $\mathcal{F}$-tower $X'\to X$. Clearly $G$ acts properly on $X'$ and
	$X'$ belongs to $\mathcal{C}$. 
\end{proof}

\section*{Appendix: Topological $2$--dimensional quasi-Helly property of systolic complexes}

In this Appendix we present yet another asphericity-like property of systolic complexes. Besides various asymptotic asphericity features 
used extensively through the article -- like e.g.\ AHA or vanishing of higher homotopy pro-groups at infinity \cites{JS2,O-ci,O-ib,OS} -- 
the new property reflects the `two-dimensional-like' nature of systolic complexes, implying the normal subgroup
theorem studied above. The result -- Theorem A below -- is a Helly-type theorem, and we believe that it will be of wide 
use for studying systolic (and, in general, SimpHAtic) complexes and groups.\footnote{In fact, our initial -- not successful, eventually -- approach to the 
normal subgroups results described in the current article, was based on Theorem A.}

Recall the Helly property of the Euclidean plane: If, for a given 
family $\{ A_0,A_1,A_2,A_3 \}$ of convex subsets, all intersections $A_i \cap A_j \cap A_k$ are nonempty,  then 
$A_0 \cap A_1\cap A_2\cap A_3\neq \emptyset$. In \cite{Swie} a quasification of such property is proved for systolic complexes:
It is shown there that the intersection of neighborhoods of corresponding convex subcomplexes has to be non-empty.
Our Theorem A below extends this result: We do not assume that the subcomplexes $A_i$ are convex -- we make assumptions
on homotopy types of them and of their intersections (thus we call our version ``topological''). This is a significant difference: 
The product (with the product metric) of two complexes satisfying the ``convex'' version has the same ``convex'' Helly property, while
for our ``topological'' version the corresponding ``Helly dimension'' may increase by taking products.  
Note also that the topological $2$--dimensional quasi-Helly property presented below is closely related to the 
\emph{$\delta$--thin tetrahedra property} announced in \cite[Introduction]{Els} (there, only geodesic triangles are considered).

Finally, let us note that Theorem A below holds also for a more general class of (SimpHAtic) complexes $X$ satisfying the following 
condition:
\medskip

\noindent
$(\ast)$ Any  simplicial map $S\to X$ from a triangulation $S$ of the $2$--sphere can be extended to a simplicial map $D \to X$ from a triangulation $D$ of the $3$--disc, such that the boundary of $D$ is $S$ and $D$ has no new (internal) vertices, that is $D^{(0)}=S^{(0)}$.

\begin{atheorem}
Let $A_0,A_1,A_2,A_3$ be simply connected subcomplexes of a systolic complex $X$. Assume that for all $i,j,k \in \{ 0,1,2,3 \}$, the intersections $A_i \cap A_j$ are connected, and $A_i\cap A_j \cap A_k \neq \emptyset$.
Then there exists a simplex with vertices $v_0,v_1,v_2,v_3$ such that $v_i \in A_i$ (possibly, some $v_i$ coincide).
\end{atheorem}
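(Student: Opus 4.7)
The plan is to reduce Theorem A to a Sperner-type colouring argument applied to a simplicial filling of a carefully built 2-sphere encoding the intersection data; the systolic hypothesis enters only through property $(\ast)$. I would first assemble, from the triple-intersection points and the pairwise-connected intersections, a simplicial map $f\colon S\to X$ from a triangulated 2-sphere $S$ with the CW structure of $\partial\Delta^3$. Explicitly: pick $p_i\in\bigcap_{j\neq i} A_j$ for each $i\in\{0,1,2,3\}$; join $p_i$ to $p_j$ by an edge path $\gamma_{ij}\subset A_k\cap A_l$, where $\{k,l\}=\{0,1,2,3\}\setminus\{i,j\}$; and for each $l$ fill the loop formed by the three paths $\gamma_{ij}$ with $l\notin\{i,j\}$ by a triangulated disc $\phi_l\subset A_l$, using simple connectedness of $A_l$. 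The sphere $S$ inherits corners $\{p_i\}$, edge paths $\{\gamma_{ij}\}$ and face discs $\{\phi_l\}$.

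Next, I would apply property $(\ast)$, valid in the systolic setting, to extend $f$ to a simplicial map $f\colon D\to X$ with $D$ a triangulation of the 3-ball, $\partial D=S$, and $D^{(0)}=S^{(0)}$. In particular, every vertex of $D$ lies on $\partial D$, so there is a canonical nonempty set $I(v):=\{i : f(v)\in A_i\}$ attached to each vertex, which will be the input for the colouring.

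I would then run a \emph{twisted} Sperner argument. Fix the 4-cycle derangement $\pi(i)=i+1\pmod 4$, and define $g\colon V(D)\to\{0,1,2,3\}$ by setting $g(p_i)=\pi(i)$ (legitimate since $p_i\in A_{\pi(i)}$), $g(v)=l$ for $v$ interior to $\phi_l$, and for $v$ interior to $\gamma_{ij}$ any element of $\{k,l\}\cap\{\pi(i),\pi(j)\}$, with $\{k,l\}=\{0,1,2,3\}\setminus\{i,j\}$. A direct pair-by-pair check confirms this last intersection is nonempty for each of the six pairs and that every chosen label lies in $I(v)$, so that $f(v)\in A_{g(v)}$ throughout. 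On $\partial D$ the labelling realises the simplicial self-map of $\partial\Delta^3$ induced by $\pi$ and therefore has degree $\pm 1$; Sperner's lemma produces a 3-simplex $\sigma=\langle u_0,u_1,u_2,u_3\rangle$ of $D$ whose $g$-labels exhaust $\{0,1,2,3\}$. Relabelling so that $g(u_i)=i$, the vertices $v_i:=f(u_i)$ satisfy $v_i\in A_i$ and span the simplex $f(\sigma)\subset X$, which is what is required.

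The delicate point I expect to be the main obstacle is the simultaneous satisfaction of the Sperner boundary conditions and the membership constraint $f(v)\in A_{g(v)}$: the identity Sperner labelling is unavailable because $p_i\notin A_i$ in general, so some derangement twist is essentially forced, and it is a genuine combinatorial check that the 4-cycle is compatible with every edge and face. The vertex-economy clause $D^{(0)}=S^{(0)}$ built into property $(\ast)$ is equally indispensable, for any new interior vertex of $D$ could fall outside $\bigcup_i A_i$ and would leave $g$ undefined there, collapsing the whole scheme.
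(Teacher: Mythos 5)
Your proof is correct, and it follows the same overall architecture as the paper's: build a simplicial $2$--sphere from the triple-intersection points, the connecting paths in the pairwise intersections, and filling discs inside the $A_l$'s; fill this sphere by a triangulated $3$--ball with no interior vertices via the systolic filling property $(\ast)$ (in the paper this is quoted from \cite{JS2}, see also \cite[Theorem 2.4]{Els}); and extract the desired simplex by Sperner's lemma. The one genuine divergence is in how the Sperner boundary conditions are arranged. The paper sidesteps the obstruction you correctly identify (that $p_i\notin A_i$ in general) by passing to the \emph{dual} tetrahedral structure on the sphere: it places the Sperner corners $w_i$ in the interiors of the face discs $D_i\subset A_i$, joins them by arcs $\beta_{ij}\subset D_i\cup D_j$, and then the identity labelling $c(w_i)=i$ is compatible with membership, so the classical Sperner lemma applies directly. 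You instead keep the primal structure with corners $p_i$ and twist the labelling by the $4$--cycle $\pi$. Your combinatorial checks are right: $\{k,l\}\cap\{\pi(i),\pi(j)\}\neq\emptyset$ for all six edges, and the face condition amounts to $l\neq\pi(l)$, which holds because $\pi$ is a derangement. One simplification is available to you: composing your labelling with $\pi^{-1}$ converts it into a standard Sperner labelling for the primal tetrahedral structure (with $p_i$ the corner labelled $i$), so the classical existence form of the lemma already suffices and the degree computation, while valid, is not needed. The two routes cost about the same -- the paper pays with the construction of the dual structure and a subdivision, you pay with the derangement bookkeeping -- and everything else (the role of connectedness of $A_i\cap A_j$, simple connectedness of the $A_l$, the indispensability of $D^{(0)}=S^{(0)}$, and the final observation that $\Phi$ being simplicial forces the four coloured vertices into a common simplex) matches the paper.
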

\begin{proof}
If $A_0 \cap A_1\cap A_2\cap A_3 \neq \emptyset$ then the assertion is trivial. Therefore further we assume 
that the intersection is empty.

First, we will construct a simplicial $2$--sphere, $\varphi \colon S \to X$. 
For all $l\in \{0,1,2,3\}$, pick a vertex $z_l \in A_i\cap A_j \cap A_k$, where $\{ i,j,k,l \} = \{0,1,2,3 \}$.
For all $l,k$, let $\gamma_{kl}\subset A_i \cap A_j$, where  $\{ i,j,k,l \} = \{0,1,2,3 \}$, be a path connecting $z_k$ and $z_l$.
For $\{ i,j,k,l \} = \{0,1,2,3 \}$, the cycle consisting of $\gamma_{kl}$, $\gamma_{lj}$, and $\gamma_{jk}$ is contained
in $A_i$.
 Therefore, there is a simplicial filling $\varphi_i\colon D_i^2 \to A_i$ of this cycle inside $A_i$, where $D_i^2$ is a triangulation of
the $2$--disc. 
Combining such fillings, we get a simplicial
$2$--sphere $\varphi \colon S \to X$. 

The sphere $S$ has a natural structure of the boundary of the tetrahedron: Vertices are some pre-images $\wt z_i$ of $z_i$'s;
edges are some pre-images $\wt \gamma_{ij}$ of $\gamma_{ij}$'s; triangles are the discs $D_i$'s. However, for our purposes we consider
a ``dual'' structure defined as follows (see Figure~\ref{f:Helly}).
\begin{figure}[h!]
\centering
\includegraphics[width=0.9\textwidth]{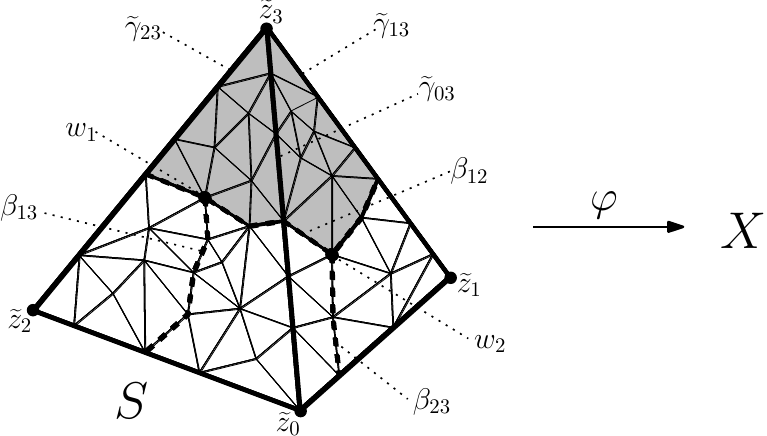}
\caption{The simplicial sphere $\varphi \colon S \to X$. The dual triangle $w_0w_1w_2$ is shaded.}
\label{f:Helly}
\end{figure}
For every $i$, pick a vertex $w_i$ inside the interior of $D_i$ -- subdivide $D_i$ if necessary (and redefine $\varphi$ appropriately). For every $i,j$, find a simple path $\beta_{ij}$ in
$S$ connecting $w_i$ and $w_j$ and contained in $D_i\cup D_j$. Subject to subdividing $S$, we may assume that the intersection
$\beta_{ij} \cap \beta_{kl}$ is either empty or one end-vertex. This defines a new structure of the boundary of the tetrahedron:
Vertices are $w_i$'s; edges are $\beta_{ij}$'s; triangles are connected components of the rest. 
By \cite{JS2} (see \cite[Theorem 2.4]{Els}), there exists a simplicial extension $\Phi \colon D \to X$ of $\varphi$ with the following properties: $D$ is a triangulation of the $3$--disc, with $S$ being a triangulation of the boundary of $D$, and without new (that is, internal) vertices.\footnote{That is, the condition $(\ast)$ above is satisfied.}
Now we define a \emph{coloring} of vertices of $D$, that is a map $c \colon D^{(0)} \to \{ 0,1,2,3 \}$:
\begin{itemize}
\item
$c(w_i):=i$;
\item
$c(v)\in \{ i,j \}$, for $v\in \beta_{ij}$, such that $\varphi(v)\in A_{c(v)}$;
\item
for a vertex $v$ in the triangle $w_iw_jw_k$, we set $c(v)\in \{ i,j,k \}$, such that $\varphi (v) \in A_{c(v)}$.  
\end{itemize}
Observe that the coloring is well-defined (though not unique), and that it is Sperner's coloring of the triangulation of the
 tetrahedron with vertices 
$w_0,w_1,w_2,w_3$. Therefore, by
 Sperner's Lemma \cite{Spe}, there exists a $3$--simplex in $D$ with vertices $v'_0,v'_1,v'_2,v'_3$ 
such that $c(v'_i)=i$, hence $v_i:=\Phi(v'_i) \in A_i$, for all $i\in \{ 0,1,2,3 \}$. Since $\Phi$ is simplicial, it follows that 
$v_0,v_1,v_2,v_3$ are contained in a simplex, and thus the proof is finished.
\end{proof}


\begin{bibdiv}
\begin{biblist}


\bib{ABC+}{article}{
	author={Alonso, J. M.},
	author={Brady, T.},
	author={Cooper, D.},
	author={Ferlini, V.},
	author={Lustig, M.},
	author={Mihalik, M.},
	author={Shapiro, M.},
	author={Short, H.},
	title={Notes on word hyperbolic groups},
	note={Edited by Short},
	conference={
		title={Group theory from a geometrical viewpoint},
		address={Trieste},
		date={1990},
	},
	book={
		publisher={World Sci. Publ., River Edge, NJ},
	},
	date={1991},
	pages={3--63},
	review={\MR{1170363 (93g:57001)}},
}

\bib{BaSw1997}{article}{
	author={Ballmann, W.},
	author={\'{S}wi\polhk atkowski, J.},
	title={On $L^2$-cohomology and property (T) for automorphism groups of
		polyhedral cell complexes},
	journal={Geom. Funct. Anal.},
	volume={7},
	date={1997},
	number={4},
	pages={615--645},
}

\bib{BaLu2001}{book}{
	author={Bass, H.},
	author={Lubotzky, A.},
	title={Tree lattices},
	series={Progress in Mathematics},
	volume={176},
	note={With appendices by Bass, L. Carbone, Lubotzky, G. Rosenberg and J.
		Tits},
	publisher={Birkh\"{a}user Boston, Inc., Boston, MA},
	date={2001},
	pages={xiv+233},
	isbn={0-8176-4120-3},
}

\bib{Bie}{article}{
	author={Bieri, R.},
	title={Normal subgroups in duality groups and in groups of cohomological
		dimension $2$},
	journal={J. Pure Appl. Algebra},
	volume={7},
	date={1976},
	number={1},
	pages={35--51},
	issn={0022-4049},
}

\bib{Bro}{book}{
	author={Brown, Kenneth S.},
	title={Cohomology of groups},
	series={Graduate Texts in Mathematics},
	volume={87},
	note={Corrected reprint of the 1982 original},
	publisher={Springer-Verlag, New York},
	date={1994},
	pages={x+306},
	isbn={0-387-90688-6},
}

\bib{CaMaSteZa1993}{article}{
	author={Cartwright, D. I.},
	author={Mantero, A. M.},
	author={Steger, T.},
	author={Zappa, A.},
	title={Groups acting simply transitively on the vertices of a building of
		type {$\tilde A_2$}. I},
	journal={Geom. Dedicata},
	volume={47},
	date={1993},
	number={2},
	pages={143--166},
}

\bib{Che1}{article}{
	author={Chepoi, V.},
	title={Graphs of some ${\rm CAT}(0)$ complexes},
	journal={Adv. in Appl. Math.},
	volume={24},
	date={2000},
	number={2},
	pages={125--179},
	issn={0196-8858},
}

\bib{ChOs}{article}{
	author={Chepoi, V.},
	author={Osajda, D.},
	title={Dismantlability of weakly systolic complexes and applications},
	journal={Trans. Amer. Math. Soc.},
	volume={367},
	date={2015},
	number={2},
	pages={1247--1272},
	issn={0002-9947},
}

\bib{Dun0}{article}{
	author={Dunwoody, M. J.},
	title={Accessibility and groups of cohomological dimension one},
	journal={Proc. London Math. Soc. (3)},
	volume={38},
	date={1979},
	number={2},
	pages={193--215},
	issn={0024-6115},
}

\bib{Dun}{article}{
	author={Dunwoody, M. J.},
	title={The accessibility of finitely presented groups},
	journal={Invent. Math.},
	volume={81},
	date={1985},
	number={3},
	pages={449--457},
	issn={0020-9910},
}

\bib{DyJa2002}{article}{
	author={Dymara, J.},
	author={Januszkiewicz, T.},
	title={Cohomology of buildings and their automorphism groups},
	journal={Invent. Math.},
	volume={150},
	date={2002},
	number={3},
	pages={579--627},
}

\bib{Els}{article}{
	author={Elsner, T.},
	title={Flats and the flat torus theorem in systolic spaces},
	journal={Geom. Topol.},
	volume={13},
	date={2009},
	number={2},
	pages={661--698},
	issn={1465-3060},
}

\bib{FaHruTho}{article}{
	author={Farb, B.},
	author={Hruska, C.},
	author={Thomas, A.},
	title={Problems on automorphism groups of nonpositively curved polyhedral
		complexes and their lattices},
	conference={
		title={Geometry, rigidity, and group actions},
	},
	book={
		series={Chicago Lectures in Math.},
		publisher={Univ. Chicago Press, Chicago, IL},
	},
	date={2011},
	pages={515--560},
}

\bib{FaJa}{article}{
	author={Farber, M.},
	author={Jamison, R. E.},
	title={On local convexity in graphs},
	journal={Discrete Math.},
	volume={66},
	date={1987},
	number={3},
	pages={231--247},
	issn={0012-365X},
}

\bib{Gan2012}{article}{
	author={Gandini, G.},
	title={Bounding the homological finiteness length},
	journal={Bull. Lond. Math. Soc.},
	volume={44},
	date={2012},
	number={6},
	pages={1209--1214},
	issn={0024-6093},
}

\bib{Geo-book}{book}{
	author={Geoghegan, R.},
	title={Topological methods in group theory},
	series={Graduate Texts in Mathematics},
	volume={243},
	publisher={Springer, New York},
	date={2008},
	pages={xiv+473},
	isbn={978-0-387-74611-1},
}

\bib{Gom}{article}{
	author={G{\'o}mez-Ortells, R.},
	title={Compactly supported cohomology of systolic 3-pseudomanifolds},
	journal={Colloq. Math.},
	volume={135},
	date={2014},
	number={1},
	pages={103--112},
	issn={0010-1354},
}

\bib{Gro}{article}{
	author={Gromov, M.},
	title={Hyperbolic groups},
	conference={
		title={Essays in group theory},
	},
	book={
		series={Math. Sci. Res. Inst. Publ.},
		volume={8},
		publisher={Springer, New York},
	},
	date={1987},
	pages={75--263},
}

\bib{Hag}{article}{
	title     ={Complexes simpliciaux hyperboliques
		de grande dimension},
	author    ={Haglund, F.},
	status    ={preprint},
	journal   ={Prepublication Orsay},
	volume    ={71},
	date      ={2003},
	eprint    ={http://www.math.u-psud.fr/~haglund/cpl_hyp_gde_dim.pdf}
}

\bib{HaMP}{article}{
	author={Hanlon, R. G.},
	author={Mart{\'{\i}}nez-Pedroza, E.},
	title={Lifting group actions, equivariant towers and subgroups of
		non-positively curved groups},
	journal={Algebr. Geom. Topol.},
	volume={14},
	date={2014},
	number={5},
	pages={2783--2808},
	issn={1472-2747},
}

\bib{Hat}{book}{
	author={Hatcher, A.},
	title={Algebraic topology},
	publisher={Cambridge University Press, Cambridge},
	date={2002},
	pages={xii+544},
	isbn={0-521-79160-X},
	isbn={0-521-79540-0},
}

\bib{Hou}{article}{
	author={Houghton, C. H.},
	title={Cohomology and the behaviour at infinity of finitely presented
		groups},
	journal={J. London Math. Soc. (2)},
	volume={15},
	date={1977},
	number={3},
	pages={465--471},
	issn={0024-6107},
}

\bib{Jac}{article}{
	author={Jackson, B.},
	title={End invariants of group extensions},
	journal={Topology},
	volume={21},
	date={1982},
	number={1},
	pages={71--81},
	issn={0040-9383},
}

\bib{JS0}{article}{
	author={Januszkiewicz, T.},
	author={{\'S}wi{\c{a}}tkowski, J.},
	title={Hyperbolic Coxeter groups of large dimension},
	journal={Comment. Math. Helv.},
	volume={78},
	date={2003},
	number={3},
	pages={555--583},
	issn={0010-2571},
}

\bib{JS1}{article}{
	author={Januszkiewicz, T.},
	author={{\'S}wi{\c{a}}tkowski, J.},
	title={Simplicial nonpositive curvature},
	journal={Publ. Math. Inst. Hautes \'Etudes Sci.},
	number={104},
	date={2006},
	pages={1--85},
	issn={0073-8301},
}

\bib{JS2}{article}{
	author={Januszkiewicz, T.},
	author={{\'S}wi{\c{a}}tkowski, J.},
	title={Filling invariants of systolic complexes and groups},
	journal={Geom. Topol.},
	volume={11},
	date={2007},
	pages={727--758},
	issn={1465-3060},
}

\bib{Kro1993}{article}{
	author={Kropholler, P. H.},
	title={On groups of type $({\rm FP})_\infty$},
	journal={J. Pure Appl. Algebra},
	volume={90},
	date={1993},
	number={1},
	pages={55--67},
}

\bib{Mos}{article}{
	author={Mosher, L.},
	title={Hyperbolic extensions of groups},
	journal={J. Pure Appl. Algebra},
	volume={110},
	date={1996},
	number={3},
	pages={305--314},
	issn={0022-4049},
}

\bib{O-ci}{article}{
	author={Osajda, D.},
	title={Connectedness at infinity of systolic complexes and groups},
	journal={Groups Geom. Dyn.},
	volume={1},
	date={2007},
	number={2},
	pages={183--203},
	issn={1661-7207},
}

\bib{O-ib}{article}{
	author={Osajda, D.},
	title={Ideal boundary of 7-systolic complexes and groups},
	journal={Algebr. Geom. Topol.},
	volume={8},
	date={2008},
	number={1},
	pages={81--99},
	issn={1472-2747},
}

\bib{O-chcg}{article}{
	author={Osajda, D.},
	title={A construction of hyperbolic Coxeter groups},
	journal={Comment. Math. Helv.},
	volume={88},
	date={2013},
	number={2},
	pages={353--367},
	issn={0010-2571},
}

\bib{O-sdn}{article}{
	title     ={A combinatorial non-positive
		curvature I: weak systolicity},
	author    ={Osajda, D.},
	status    ={preprint},
	eprint    ={arXiv:1305.4661},
	date      ={2013}
}

\bib{OsPr}{article}{
	author={Osajda, D.},
	author={Prytu\l a, T.},
	title={Classifying spaces for families of subgroups for systolic groups},
	journal={Groups Geom. Dyn.},
	volume={12},
	date={2018},
	number={3},
	pages={1005--1060},
}

\bib{OS}{article}{
	author={Osajda, D.},
	author={{\'S}wi{\polhk{a}}tkowski, J.},
	title={On asymptotically hereditarily aspherical groups},
	journal={Proc. London Math. Soc.},
	date={2015},
	volume={111},
	number={1},
	pages={93--126},
}

\bib{Ser1971}{article}{
	author={Serre, J.-P.},
	title={Cohomologie des groupes discrets},
	language={French},
	conference={
		title={Prospects in mathematics},
		address={Proc. Sympos., Princeton Univ., Princeton, N.J.},
		date={1970},
	},
	book={
		publisher={Princeton Univ. Press, Princeton, N.J.},
	},
	date={1971},
	pages={77--169. Ann. of Math. Studies, No. 70},
}

\bib{SoCh}{article}{
	author={Soltan, V. P.},
	author={Chepo{\u\i}, V. D.},
	title={Conditions for invariance of set diameters under
		$d$-convexification in a graph},
	journal={Kibernetika (Kiev)},
	date={1983},
	number={6},
	pages={14--18},
	issn={0023-1274},
	translation={
		journal={Cybernetics},
		volume={19},
		date={1983},
		number={6},
		pages={750--756 (1984)},
		issn={0011-4235},
	},
}

\bib{Spe}{article}{
	author={Sperner, E.},
	title={Neuer Beweis f{\" u}r die Invarianz der Dimensionszahl und des Gebietes},
	journal={Abh. Math. Sem. Univ. Hamburg},
	volume={6},
	date={1928},
	pages={265--272},
}

\bib{Sta}{book}{
	author={Stallings, J.},
	title={Group theory and three-dimensional manifolds},
	note={A James K. Whittemore Lecture in Mathematics given at Yale
		University, 1969;
		Yale Mathematical Monographs, 4},
	publisher={Yale University Press, New Haven, Conn.-London},
	date={1971},
	pages={v+65},
}

\bib{Sw}{article}{
	author={{\'S}wi{\c{a}}tkowski, J.},
	title={Fundamental pro-groups and Gromov boundaries of 7-systolic groups},
	journal={J. Lond. Math. Soc. (2)},
	volume={80},
	date={2009},
	number={3},
	pages={649--664},
	issn={0024-6107},
}

\bib{Swie}{article}{
	title     ={Helly's theorem for systolic complexes},
	author    ={{\'S}wi{\c{e}}cicki, K.},
		status    ={preprint},
	date={2014},
	eprint    ={arXiv:1405.5194}
}

\bib{Wise}{article}{
	title     ={Sixtolic complexes and their fundamental groups},
	author    ={Wise, D. T.},
	status    ={unpublished manuscript},
	date={2003}
}

\bib{Zad2014}{article}{
	author={Zadnik, G.},
	title={Finitely presented subgroups of systolic groups are systolic},
	journal={Fund. Math.},
	volume={227},
	date={2014},
	number={2},
	pages={187--196},
}

\bib{Zuk1996}{article}{
	author={\.{Z}uk, A.},
	title={La propri\'{e}t\'{e} (T) de Kazhdan pour les groupes agissant sur les
		poly\`edres},
	language={French, with English and French summaries},
	journal={C. R. Acad. Sci. Paris S\'{e}r. I Math.},
	volume={323},
	date={1996},
	number={5},
	pages={453--458},
}

\end{biblist}
\end{bibdiv}

\end{document}